\newtheorem{theorem}{Theorem}
\newtheorem{lemma}{Lemma}
\newtheorem{corollary}{Corollary}
\newtheorem{example}{Example}
\newtheorem{definition}{Definition}
\newtheorem*{conjecture}{Conjecture}
\begin{document}
\renewcommand{\thepage}{}
{\bfseries\Large
\begin{center}
THE CANTOR GAME: WINNING\\
STRATEGIES AND DETERMINACY
\end{center}
}
\vspace{0.5in}
\begin{center}
by
\end{center}
\vspace{0.5in}
{\bfseries\Large
\begin{center}
MAGNUS D. LADUE
\end{center}
}

\newpage

\setcounter{section}{-1}

\section{Abstract}\label{S:abstract}
In \cite{GT98} Grossman and Turett define the Cantor game.  In \cite{mB07} Matt Baker
proves several results about the Cantor game and poses three challenging questions about
it:

\emph{Do there exist uncountable subsets of $[0, 1]$ for which:
\begin{enumerate}
\item Alice does not have a winning strategy;
\item Bob has a winning strategy;
\item neither Alice nor Bob has a winning strategy?
\end{enumerate}}

In this paper we show that the answers to these questions depend upon which axioms of set
theory are assumed.  Specifically, if we assume the Axiom of Determinacy in addition to the
Zermelo-Fraenkel axioms, then the answer to all three questions is ``no.''  If instead we
assume the Zermelo-Fraenkel axioms together with the Axiom of Choice, then the answer to
questions 1 and 3 is ``yes,'' and the answer to question 2 is likely to be ``no.''

Author's Note: This paper was my entry in the 2017 Regeneron Science Talent Search.  It
earned a Top 300 Scholar Award as well as Research Report and Student Initiative badges.

\newpage

\setcounter{page}{1}

\renewcommand{\thepage}{\arabic{page}}

\section{Introduction}\label{S:intro}
In their paper \cite{GT98} Grossman and Turett define the Cantor game.  In \cite{mB07} Matt
Baker proves several results about the Cantor game and poses several challenging questions
about it.  The Cantor game is an infinite game played on the real line by two players, A
(Alice) and B (Bob).  Let $a_{0}$ and $b_{0}$ be fixed real numbers such that $a_{0} <
b_{0}$, and in the interval $[a_{0}, b_{0}]$ fix a subset $S$, which we call the
\emph{target set}.  (In the original version of the Cantor game, $a_{0}$ and $b_{0}$ are
chosen to be 0 and 1, respectively.  In this case we take them to be arbitrary real
numbers.)  Initially, player A chooses $a_{0}$, and then player B chooses $b_{0}$.  On the
$n$th turn, for $n \ge 1$, player A chooses $a_{n}$ with the property that $a_{n-1} <
a_{n} < b_{n-1}$, and then player B chooses $b_{n}$ such that $a_{n} < b_{n} < b_{n-1}$.
Since the sequence $(a_{n})_{n \ge 0}$ is strictly increasing and bounded above by $b_{0}$,
$\lim_{n \to \infty} a_{n} = a$ exists, and since $(b_{n})_{n \ge 0}$ is strictly
decreasing and bounded below by $a_{0}$, $\lim_{n \to \infty} b_{n} = b$ exists.  Since
$a_{n} < b_{n}$ for all $n$, we also have $a \le b$.  In the end, if $a \in S$, player A
wins; otherwise, if $a \notin S$, player B wins.  Figure~\ref{F:play} illustrates a play
of the Cantor game.

Baker uses the Cantor game to prove that the closed interval $[0, 1]$ is uncountable
\cite[p.377]{mB07} and that every perfect set is uncountable \cite[p.378]{mB07}.  He also
shows that player B has a winning strategy when the target set $S$ is countable
\cite[p.377]{mB07}.  After proving these results he poses three challenging questions
\cite[p.379]{mB07}:

\emph{Do there exist uncountable subsets of $[0, 1]$ for which:
\begin{enumerate}
\item Alice does not have a winning strategy;
\item Bob has a winning strategy;
\item neither Alice nor Bob has a winning strategy?
\end{enumerate}}

We will show that the answers to these questions depend upon which axioms of set theory we
assume.  First we assume only the Zermelo-Fraenkel axioms of set theory (ZF) and prove some
results about winning strategies for the Cantor game.  Then assuming the Axiom of
Determinacy in addition to ZF (ZF + AD), we shall prove that the answer to all three
questions is ``no.''  Finally, assuming the Axiom of Choice in addition to ZF (ZFC), we
will show that the answer to questions 1 and 3 is ``yes,'' and we will give evidence that
the answer to question 2 is likely to be ``no.''

\begin{figure}
\centering\includegraphics{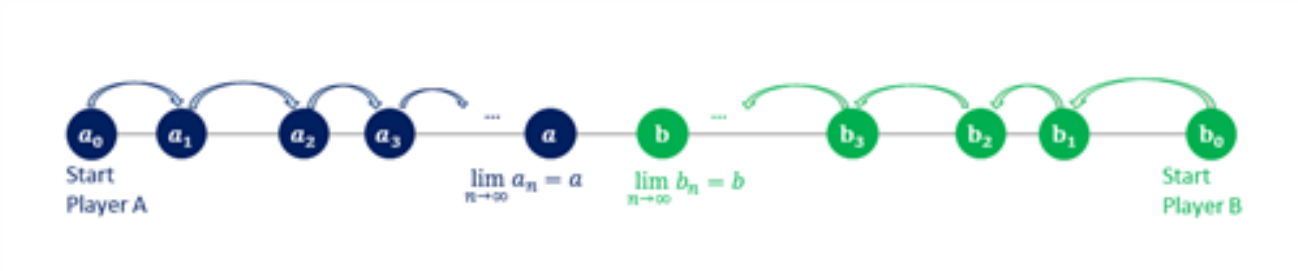}
\caption{A play of the Cantor game. Player A wins if $a \in S$.}\label{F:play}
\end{figure}

\section{Strategic definitions}\label{S:defs}
Thomas Jech begins his monograph on axiomatic set theory by discussing the eight
Zermelo-Fraenkel (ZF) axioms.  He then develops some consequences of those axioms and shows
how the real numbers can be constructed without additional assumptions \cite{tJ03}.  This
allows us to see which standard results of elementary real analysis follow from the ZF
axioms.  In this section and the following two sections we assume only the eight ZF axioms.
Once we have proved several fascinating results about the Cantor game, we shall assume
additional axioms and provide answers to our three questions.

Because our answers will require precise definitions and properties of strategies and
generalized Cantor sets, we begin by defining these fundamental concepts.  Our first
definition was inspired by Oxtoby's work on the related Banach-Mazur game in \cite[p.27]
{jO80}.  In the following definitions we consider the Cantor game on $[a_{0}, b_{0}]$,
where $a_{0}$ and $b_{0}$ are fixed real numbers satisfying $a_{0} < b_{0}$, with an
uncountable target set $S$.

\begin{definition}\label{D:strat}
A \emph{strategy} for player A is an infinite sequence of real-valued functions
$(f_{n})_{n \ge 0}$.  The function $f_{0}$ is required to have the domain $\{(a_{0},
b_{0})\}$, and its value must satisfy $a_{0} < f_{0}(a_{0}, b_{0}) < b_{0}$.  For each
$n \ge 2$ the function $f_{n-1}$ has domain $\{(a_{0}, b_{0}, a_{1}, b_{1}, \dots, a_{n-1},
b_{n-1}) \mid a_{0} < a_{1} < \dots < a_{n-1} < b_{n-1} < b_{n-2} < \dots < b_{0}\}$.  If
$a_{n} = f_{n-1}(a_{0}, b_{0}, \dots, a_{n-1}, b_{n-1})$, then we require that $a_{n-1} <
a_{n} < b_{n-1}$.

Likewise, a \emph{strategy} for player B is a sequence of real-valued functions
$(g_{n})_{n \ge 0}$.  The function $g_{0}$ is required to have the domain $\{(a_{0},
b_{0}, a_{1}) \mid a_{0} < a_{1} < b_{0}\}$, and its value must satisfy $a_{1} <
g_{0}(a_{0}, b_{0}, a_{1}) < b_{0}$.  For each $n \ge 2$ the function $g_{n-1}$ has domain
$\{(a_{0}, b_{0}, a_{1}, b_{1}, \dots, a_{n-1}, b_{n-1}, a_{n}) \mid a_{0} < a_{1} < \dots
< a_{n} < b_{n-1} < b_{n-2} < \dots < b_{0}\}$.  Furthermore, if $b_{n} = g_{n-1}(a_{0},
b_{0}, \dots, a_{n-1}, b_{n-1}, a_{n})$, then we require that $a_{n} < b_{n} < b_{n-1}$.
\end{definition}

\begin{definition}\label{D:play}
A \emph{play} of the game is an ordered pair of sequences $((a_{n})_{n \ge 0},\\ (b_{n})_{n
\ge 0})$, where $a_{n-1} < a_{n} < b_{n} < b_{n-1}$ for each $n \ge 1$.  The play is
\emph{consistent with} a given strategy for player A $(f_{n})_{n \ge 0}$ if and only if
$a_{n} = f_{n-1}(a_{0}, b_{0}, a_{1}, b_{1}, \dots, a_{n-1}, b_{n-1})$ for all $n \ge 1$.
Similarly, the play is \emph{consistent with} the strategy for player B $(g_{n})_{n \ge 0}$
if and only if $b_{n} = g_{n-1}(a_{0}, b_{0},\\ \dots, a_{n-1}, b_{n-1}, a_{n})$ for all
$n \ge 1$.
\end{definition}

\begin{definition}\label{D:limset}
The \emph{limit set} of a strategy $(f_{n})_{n \ge 0}$ for player A is defined as follows:
\[
   L((f_{n})) = \{\lim_{n \to \infty} a_{n} \mid ((a_{n}), (b_{n})) \text{
                is a play of the game consistent with }(f_{n})\}.
\]
Likewise, the \emph{limit set} of a strategy $(g_{n})_{n \ge 0}$ for player B is defined as
follows:
\[
   L((g_{n})) = \{\lim_{n \to \infty} a_{n} \mid ((a_{n}), (b_{n})) \text{
                is a play of the game consistent with }(g_{n})\}.
\]
We define a strategy $(f_{n})_{n \ge 0}$ for player A to be a \emph{winning strategy} if
and only if $L((f_{n})) \subset S$.  A strategy $(g_{n})_{n \ge 0}$ for player B is said to
be a \emph{winning strategy} if and only if $L((g_{n}))\subset [a_{0}, b_{0}] - S$.
\end{definition}

\section{Generalized Cantor sets}\label{S:gcs}
We define the concept of a generalized Cantor set as follows.

\begin{definition}\label{D:gc}
Let $I = [c, d]$, where $c < d$, and suppose that $(e_{n})_{n \ge 1}$ is a strictly
decreasing sequence of positive real numbers that converges to 0.  For each $n \ge 1$ and
each of the $2^{n}$ finite sequences of 0s and 1s, $(i_{1}, i_{2}, \dots, i_{n})$, let
$I_{i_{1}, i_{2}, \dots, i_{n}} = [c_{i_{1}, i_{2}, \dots, i_{n}}, d_{i_{1}, i_{2}, \dots,
i_{n}}]$ be a closed interval.  Suppose that the following properties hold:
\begin{enumerate}
\item $0 < d_{i_{1}, \dots, i_{n}} - c_{i_{1}, \dots, i_{n}} < e_{n}$ for all $n \ge 1$ and
      all $i_{1}, \dots, i_{n} \in \{0, 1\}$;
\item $I_{0}, I_{1} \subset I$ and $I_{i_{1}, \dots, i_{n}} \subset I_{i_{1}, \dots,
      i_{n-1}}$ for all $n \ge 2$ and all $i_{1}, \dots, i_{n} \in \{0, 1\}$;
\item $I_{0} \cap I_{1} = \emptyset$ and $I_{i_{1}, \dots, i_{n-1}, 0} \cap I_{i_{1},
      \dots, i_{n-1}, 1} = \emptyset$ for all $n \ge 2$ and all $i_{1}, \dots, i_{n-1} \in
      \{0, 1\}$.
\end{enumerate}

For each $n\ge 1$ let
\[
   C_{n} = \bigcup_{i_{1}, \dots, i_{n} \in \{0, 1\}} I_{i_{1}, i_{2}, \dots, i_{n}},
\]
and define
\[
   C = \bigcap_{n \ge 1} C_{n}.
\]
Then $C$ is called the \emph{generalized Cantor set} generated by the collection of
intervals
\[
   \mathcal{C} = \{I_{i_{1}, \dots, i_{n}} \mid n \ge 1\text{ and }i_{1}, \dots, i_{n} \in
                 \{0, 1\}\}.
\]
\end{definition}

Generalized Cantor sets have many of the familiar properties of the Cantor middle-thirds
set.  Before stating our main results on generalized Cantor sets, we first prove a lemma.

\begin{lemma}\label{L:gc}
Let $C$ be the generalized Cantor set generated by the collection of intervals
$\mathcal{C}$ in Definition~\ref{D:gc}.  Then for any $n \ge 1$, if $I_{i_{1}, \dots,
i_{n}} \cap I_{j_{1}, \dots, j_{n}} \neq \emptyset$, then $i_{1} = j_{1}, \dots, i_{n} =
j_{n}$.
\end{lemma}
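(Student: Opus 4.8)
The plan is to argue by induction on $n$, using the nesting property~(2) from Definition~\ref{D:gc} to descend one level in the tree of intervals and the sibling-disjointness property~(3) to pin down the final coordinate. The statement to be proved is that two level-$n$ intervals can overlap only if they carry identical index sequences, so the natural inductive skeleton is to reduce an overlap at level $n$ to an overlap at level $n-1$, match all but the last index by the inductive hypothesis, and then dispose of the last index directly.

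For the base case $n = 1$, I would suppose $I_{i_{1}} \cap I_{j_{1}} \neq \emptyset$. If $i_{1} \neq j_{1}$, then $\{i_{1}, j_{1}\} = \{0, 1\}$, so this intersection is exactly $I_{0} \cap I_{1}$, which property~(3) declares empty; hence $i_{1} = j_{1}$. For the inductive step, assume the claim at level $n-1$ and suppose $I_{i_{1}, \dots, i_{n}} \cap I_{j_{1}, \dots, j_{n}} \neq \emptyset$. First I would invoke the containments $I_{i_{1}, \dots, i_{n}} \subset I_{i_{1}, \dots, i_{n-1}}$ and $I_{j_{1}, \dots, j_{n}} \subset I_{j_{1}, \dots, j_{n-1}}$ from property~(2), which force $I_{i_{1}, \dots, i_{n-1}} \cap I_{j_{1}, \dots, j_{n-1}} \neq \emptyset$. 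The inductive hypothesis then yields $i_{k} = j_{k}$ for all $k \le n-1$. With the prefixes now equal, the original intersection is $I_{i_{1}, \dots, i_{n-1}, i_{n}} \cap I_{i_{1}, \dots, i_{n-1}, j_{n}}$; if $i_{n} \neq j_{n}$ this is precisely $I_{i_{1}, \dots, i_{n-1}, 0} \cap I_{i_{1}, \dots, i_{n-1}, 1}$, which property~(3) says is empty, a contradiction. Therefore $i_{n} = j_{n}$, and the induction closes.

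The argument is almost entirely bookkeeping, so rather than a deep obstacle the one point demanding care is the ordering of the two reductions in the inductive step. One must first use nesting to drop to level $n-1$ and apply the hypothesis there, and only afterward appeal to sibling-disjointness for the terminal index. Reversing the order would produce an ill-formed appeal to property~(3), since that property only compares two intervals whose index sequences share the common prefix $(i_{1}, \dots, i_{n-1})$; establishing that the prefixes actually agree is exactly what the inductive hypothesis supplies.
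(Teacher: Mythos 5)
Your argument is correct and matches the paper's proof essentially line for line: the same induction on $n$, the same use of property~(2) to reduce a nonempty intersection at level $n$ to one at level $n-1$, and the same appeal to property~(3) to force the last indices to agree. No further comment is needed.
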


\begin{proof}
When $n = 1$, if $i_{1} \ne j_{1}$, then $I_{i_{1}} \cap I_{j_{1}} = \emptyset$, and so the
result holds.  Now assume that the result holds for $n - 1$.  Suppose that $x \in I_{i_{1},
\dots, i_{n}} \cap I_{j_{1}, \dots, j_{n}}$.  By (2) of Definition~\ref{D:gc} we see that
$x \in I_{i_{1}, \dots, i_{n-1}} \cap I_{j_{1}, \dots, j_{n-1}}$.  Now, by the induction
hypothesis, $i_{1} = j_{1}, \dots, i_{n-1} = j_{n-1}$.  Hence $x \in I_{i_{1}, \dots,
i_{n-1}, i_{n}} \cap I_{i_{1}, \dots, i_{n-1}, j_{n}}$.  By (3) of Definition~\ref{D:gc},
these intervals are disjoint if $i_{n} \ne j_{n}$.  It follows that $i_{n} = j_{n}$, and so
the result is true by induction.
\end{proof}

\begin{theorem}\label{T:gc}
Let $C$ be the generalized Cantor set generated by the collection of intervals
$\mathcal{C}$ in Definition~\ref{D:gc}.  Then the following properties hold:
\begin{enumerate}
\item If $(i_{n})_{n \ge 1}$ is any sequence in $\{0, 1\}$, then $\lim_{n \to \infty}
      c_{i_{1}, \dots, i_{n}} =\\ \lim_{n \to \infty} d_{i_{1}, \dots, i_{n}}$;
\item $x \in C$ if and only if there is a unique sequence $(i_{n})_{n \ge 1}$ in $\{0, 1\}$
      such that $\{x\} = \bigcap_{n \ge 1} I_{i_{1}, \dots, i_{n}}$.  Furthermore, since
      $c_{i_{1}, \dots, i_{n}} \le x \le d_{i_{1}, \dots, i_{n}}$ for all $n$ and the two
      sequences have equal limits, we must have $\lim_{n \to \infty} c_{i_{1}, \dots,
      i_{n}} = x = \lim_{n \to \infty} d_{i_{1}, \dots, i_{n}}$;
\item $C$ is a perfect set.
\end{enumerate}
\end{theorem}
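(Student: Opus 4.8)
The plan is to prove the three parts in the order stated, since part~(2) rests on part~(1) and part~(3) rests on both.

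For part~(1), I would fix a sequence $(i_n)_{n \ge 1}$ in $\{0, 1\}$. By property~(2) of Definition~\ref{D:gc} the intervals $I_{i_1, \dots, i_n}$ are nested, so the left endpoints $c_{i_1, \dots, i_n}$ form a nondecreasing sequence bounded above by $d$ and the right endpoints $d_{i_1, \dots, i_n}$ form a nonincreasing sequence bounded below by $c$. Hence both limits exist; call them $c^{*}$ and $d^{*}$, with $c^{*} \le d^{*}$ since $c_{i_1, \dots, i_n} \le d_{i_1, \dots, i_n}$ for every $n$. By property~(1) we have $0 \le d_{i_1, \dots, i_n} - c_{i_1, \dots, i_n} < e_n$, and since $e_n \to 0$ a squeeze argument gives $d^{*} - c^{*} = 0$, which is the claim.

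For part~(2) I would argue both implications. Suppose first that $x \in C$. Then $x \in C_n$ for every $n$, so $x$ lies in at least one level-$n$ interval; by Lemma~\ref{L:gc} this interval is unique, and I denote its index by $(i_1^{(n)}, \dots, i_n^{(n)})$. The key step is to show these indices cohere into a single sequence: since $I_{i_1^{(n)}, \dots, i_n^{(n)}} \subset I_{i_1^{(n)}, \dots, i_{n-1}^{(n)}}$ by nesting, the latter level-$(n-1)$ interval also contains $x$, so by uniqueness it equals $I_{i_1^{(n-1)}, \dots, i_{n-1}^{(n-1)}}$, and Lemma~\ref{L:gc} forces the indices to agree. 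This lets me define a single sequence $(i_n)$ with $x \in I_{i_1, \dots, i_n}$ for all $n$; part~(1) together with the nested interval theorem then shows $\bigcap_n I_{i_1, \dots, i_n}$ is a single point, necessarily $x$. Uniqueness of the sequence is immediate from Lemma~\ref{L:gc}, since any two sequences realizing $x$ must agree in every coordinate. Conversely, if $\{x\} = \bigcap_n I_{i_1, \dots, i_n}$ for some binary sequence, then $x \in I_{i_1, \dots, i_n} \subset C_n$ for every $n$, so $x \in C$. The ``furthermore'' clause follows by squeezing $x$ between $c_{i_1, \dots, i_n}$ and $d_{i_1, \dots, i_n}$ and invoking part~(1). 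I expect the coherence of the indices to be the main obstacle, as it is the one place where nesting and Lemma~\ref{L:gc} must be combined carefully.

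For part~(3) I would first note that each $C_n$ is a finite union of closed intervals, hence closed, so $C = \bigcap_n C_n$ is closed. To show $C$ has no isolated points, fix $x \in C$ with its sequence $(i_n)$ furnished by part~(2). For each $n$ I form the sequence that agrees with $(i_k)$ in the first $n$ coordinates but differs at coordinate $n+1$ (continuing arbitrarily afterward); by the nested interval theorem this sequence determines a point $x_n \in C$, and $x_n \ne x$ because the two sequences differ, by the uniqueness in part~(2). Since $x$ and $x_n$ both lie in $I_{i_1, \dots, i_n}$, whose length is less than $e_n$, we obtain $|x_n - x| < e_n \to 0$, so $x_n \to x$ with $x_n \ne x$. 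Thus every $x \in C$ is a limit point of $C$, and being also closed, $C$ is perfect.
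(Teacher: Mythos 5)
Your proposal is correct and follows essentially the same route as the paper: monotone endpoints plus the $e_n$ squeeze for (1), an induction/coherence argument via Lemma~\ref{L:gc} to extract the unique coding sequence for (2), and a one-bit flip at a deep coordinate to produce nearby distinct points of $C$ for (3). The only cosmetic difference is that the paper verifies the singleton property of $\bigcap_n I_{i_1,\dots,i_n}$ directly from the $e_n$ bound rather than citing the nested interval theorem, and in (3) it flips exactly one coordinate (keeping the tail equal to $(i_n)$) rather than continuing arbitrarily, which keeps every choice canonical.
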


\begin{proof}
\begin{enumerate}
\item
Let $(i_{n})_{n \ge 1}$ be any sequence in $\{0, 1\}$.  By (2) of Definition~\ref{D:gc},
$I_{i_{1}, \dots, i_{n}} \subset I_{i_{1}, \dots, i_{n-1}}$, so that $c_{i_{1}, \dots,
i_{n-1}} \le c_{i_{1}, \dots, i_{n}}$.  Hence $(c_{i_{1}, \dots, i_{n}})_{n \ge 1}$ is a
monotonically increasing sequence bounded above by $d$, which implies that $\lim_{n \to
\infty} c_{i_{1}, \dots, i_{n}}$ exists.  A similar argument shows that $\lim_{n \to
\infty} d_{i_{1}, \dots, i_{n}}$ exists.  Since $\lim_{n \to \infty} e_{n} = 0$ and $0 <
d_{i_{1}, \dots, i_{n}} - c_{i_{1}, \dots, i_{n}} < e_{n}$ by (1) of Definition~\ref{D:gc},
we have $\lim_{n \to \infty}(d_{i_{1}, \dots, i_{n}} - c_{i_{1}, \dots, i_{n}}) = 0$, and
so $\lim_{n \to \infty} c_{i_{1}, \dots, i_{n}} = \lim_{n \to \infty} d_{i_{1}, \dots,
i_{n}}$.
\item
Suppose that $x \in C$; then $x \in C_{n}$ for all $n \ge 1$.  We proceed by induction on
$n$.  When $n = 1$, we know that $x \in C_{1} = I_{0} \cup I_{1}$, and we choose $i_{1}
\in \{0, 1\}$ such that $x \in I_{i_{1}}$.  Now assume that $i_{1}, i_{2}, \dots, i_{n-1}$
have been chosen such that $x \in I_{i_{1}}, x \in I_{i_{1}, i_{2}}, \dots, x \in I_{i_{1},
\dots, i_{n-1}}$.  Since $x \in C_{n}$, there exist $j_{1}, j_{2}, \dots, j_{n-1}, i_{n}
\in \{0, 1\}$ such that $x \in I_{j_{1}, \dots, j_{n-1}, i_{n}}$.  By (2) of Definition~
\ref{D:gc}, $x \in I_{j_{1}, \dots, j_{n-1}}$.  By Lemma~\ref{L:gc} this implies that
$i_{1} = j_{1}, \dots, i_{n-1} = j_{n-1}$.  Thus $x \in I_{i_{1}, \dots, i_{n}}$.  By
induction, it follows that we have defined a sequence $(i_{n})_{n \ge 1}$ such that $x \in
\bigcap_{n \ge 1} I_{i_{1}, \dots, i_{n}}$.

Next, assume that $y \in \bigcap_{n \ge 1} I_{i_{1}, \dots, i_{n}}$.  Then $x, y \in
[c_{i_{1}, \dots, i_{n}}, d_{i_{1}, \dots, i_{n}}]$ for $n \ge 1$, and so $|y - x| \le
d_{i_{1}, \dots, i_{n}} - c_{i_{1}, \dots, i_{n}} < e_{n}$ for all $n$.  Since $(e_{n})_{n
\ge 1}$ converges to $0$, it follows that $|y - x| = 0$, and so $y = x$.  Thus there exists
a sequence $(i_{n})_{n \ge 1}$ such that $\{x\} = \bigcap_{n \ge 1} I_{i_{1}, \dots,
i_{n}}$.  Now assume that there is another sequence $(j_{n})_{n \ge 1}$ having the same
property, and let $n \ge 1$ be given.  Then $x \in I_{i_{1}, \dots, i_{n}} \cap I_{j_{1},
\dots, j_{n}}$, and so by Lemma~\ref{L:gc}, we have $i_{n} = j_{n}$.  Since $n \ge 1$ was
arbitrary, it follows that the two sequences are identical.  This shows that the sequence
$(i_{n})_{n \ge 1}$ is uniquely determined.

Conversely, assume that there is a sequence $(i_{n})_{n \ge 1}$ such that $\{x\} =
\bigcap_{n \ge 1} I_{i_{1}, \dots, i_{n}}$.  Since $x \in I_{i_{1}, \dots, i_{n}} \subset
C_{n}$ for each $n \ge 1$, it follows that $x \in C$, thereby completing the proof.
\item
Since $I_{i_{1}, \dots, i_{n}}$ is closed, each $C_{n}$ is a finite union of closed sets,
so that each $C_{n}$ is closed.  So $C$ is an intersection of closed sets, and it follows
that $C$ is closed.  Now assume that $x \in C$.  Then there is a sequence $(i_{n})_{n \ge
1}$ such that $\{x\} = \bigcap_{n \ge 1} I_{i_{1}, \dots, i_{n}}$.  Let $\epsilon > 0$ be
given.  Then there exists $N$ such that $e_{N} < \epsilon$.  Define a sequence $(j_{n})_{n
\ge 1}$ as follows:
\[
   j_{n} =
   \begin{cases}
   1 - i_{n} &\text{if $n = N + 1$;}\\
   i_{n} &\text{if $n \neq N + 1$.}
   \end{cases}
\]
Let $y$ be such that $\{y\} = \bigcap_{n \ge 1} I_{j_{1}, \dots, j_{n}}$.  Then $y \in C$,
and by part (2), $y \neq x$.  Observe that $x, y \in I_{i_{1}, \dots, i_{N}}$; hence
$|x - y| \le d_{i_{1}, \dots, i_{N}} - c_{i_{1}, \dots, i_{N}} < e_{N} < \epsilon$, so that
$y \in (x - \epsilon, x + \epsilon)$.  This shows that $x$ is a limit point of $C$.  Since
$x$ was arbitrary, it follows that $C$ is a perfect set.
\end{enumerate}
\end{proof}

\section{The Cantor Game and the Zermelo-Fraenkel Axioms}\label{S:ZF}
Using our precise definitions of strategies for players A and B and those properties of
generalized Cantor sets given by Theorem ~\ref{T:gc}, we can now prove some remarkably
powerful results about limit sets for both players.  Once again we note that we are
assuming only the axioms of ZF.

First we will show that the limit set of any strategy for player A contains a generalized
Cantor set.  Then we will use this result to characterize those target sets for which
player A has a winning strategy.  Next we will show that the limit set of any strategy for
player B also contains a generalized Cantor set, and we will use this result to provide
necessary conditions for player B to have a winning strategy.  Finally, we will introduce
the Perfect Set Property and show that it has a close relationship to the Cantor game.

\subsection{Results for player A}\label{SS:a}

\begin{theorem}\label{T:a}
In the Cantor game on $[a_{0}, b_{0}]$ with an uncountable target set $S$, the limit set of
any strategy for player A contains a generalized Cantor set.
\end{theorem}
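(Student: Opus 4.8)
The plan is to build a generalized Cantor scheme $\{I_{i_1,\dots,i_n}\}$ meeting the three requirements of Definition~\ref{D:gc}, where each string $s=(i_1,\dots,i_n)$ also carries a finite initial segment $p_s$ of a play consistent with $(f_n)$, arranged so that the resulting set $C$ satisfies $C\subseteq L((f_n))$. The scheme is produced by iterating a single branching step, and essentially all the difficulty sits there, so I would first isolate it as a separation step: given a finite play $p$ consistent with $(f_n)$ that ends with a move of B (so the current interval is some $(\alpha,\beta)$ and it is A's turn), there exist two extensions of $p$, again consistent with $(f_n)$ and again ending with a move of B, whose current closed intervals are disjoint, lie in $(\alpha,\beta)$, and have length below any prescribed $\varepsilon>0$.

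The hard point, and the main obstacle, is that B cannot force the limit to move rightward: if A always answers just above the current left endpoint, then for any target $t>\alpha$ A can keep every $a_n<t$, so B alone cannot drive $\lim a_n$ past $t$. I therefore would not try to force a ``high'' branch; instead I would use A's own forced moves to split $(\alpha,\beta)$ into two disjoint slabs. A's next move is the value $\alpha'$ prescribed by the strategy, with $\alpha<\alpha'<\beta$ by Definition~\ref{D:strat}, and both extensions begin with this move. For the first extension, let B play $\beta_1=(\alpha'+\beta)/2$; the strategy then forces A's reply $\alpha''$, and since it lies in the open interval we have $\alpha'<\alpha''<\beta_1$. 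Having observed $\alpha''$, for the second extension let B instead play $\beta_0=(\alpha'+\alpha'')/2$ immediately after $\alpha'$. Every continuation of the first extension stays inside $(\alpha'',\beta_1)$, while every continuation of the second stays inside $(\alpha',\beta_0)$, and these two slabs are disjoint because $\beta_0<\alpha''$. Continuing each extension arbitrarily, say by having B halve the current interval, shrinks its current interval to a point, so truncating both after a move of B at a late enough stage yields disjoint closed intervals of length $<\varepsilon$ contained in $(\alpha,\beta)$.

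With the separation step in hand I would construct the scheme recursively. Fix any sequence $(e_n)$ decreasing to $0$, say $e_n=2^{-n}$. Start from $p_{\varnothing}=(a_0,b_0)$ with current interval $(a_0,b_0)$. At a node $s$ of length $n-1$, apply the separation step to $p_s$ with $\varepsilon=e_n$ to obtain extensions $p_{s0},p_{s1}$ and disjoint current intervals $I_{s0},I_{s1}\subset(\alpha_s,\beta_s)\subseteq I_s$ of length $<e_n$; define $I_s$ to be the current interval of $p_s$. Conditions (1)--(3) of Definition~\ref{D:gc} then hold by construction, so $C=\bigcap_n C_n$ is a generalized Cantor set and Theorem~\ref{T:gc} applies.

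Finally I would verify $C\subseteq L((f_n))$. Fix $x\in C$; by Theorem~\ref{T:gc}(2) there is a unique sequence $(i_n)$ with $\{x\}=\bigcap_n I_{i_1,\dots,i_n}$ and $x=\lim_n c_{i_1,\dots,i_n}=\lim_n d_{i_1,\dots,i_n}$. The segments $p_{i_1,\dots,i_n}$ are nested, each extending the previous one, so their union is a single infinite play $((a_m),(b_m))$, which is consistent with $(f_n)$ since every finite segment is. For each $n$, all moves made after $p_{i_1,\dots,i_n}$ remain in its current interval $I_{i_1,\dots,i_n}=[c_{i_1,\dots,i_n},d_{i_1,\dots,i_n}]$, so the increasing sequence $(a_m)$ is eventually trapped in intervals whose endpoints both converge to $x$; hence $\lim_m a_m=x$, giving $x\in L((f_n))$ by Definition~\ref{D:limset}. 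As $x\in C$ was arbitrary, $C\subseteq L((f_n))$. I would also note that every choice above---the values $\beta_1$ and $\beta_0$, the truncation stages, and A's forced moves---is made explicitly, so the argument uses only the axioms of ZF.
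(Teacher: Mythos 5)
Your proposal is correct and follows essentially the same route as the paper's proof: both build a binary tree of finite plays consistent with A's strategy, separating the two children at each node by first probing with one move of B, reading off A's forced reply, and then undercutting that reply with B's second probe so the resulting closed intervals are disjoint. The only differences are cosmetic --- the paper has B play canonical rationals from a fixed enumeration and obtains the length bound $<e_n$ in a single round by probing below the midpoint of the parent interval, whereas you use midpoints and append extra halving rounds before truncating.
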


\begin{proof}
Suppose that the uncountable target set $S$ for the Cantor game on $[a_{0}, b_{0}]$ is
given, and let $(f_{n})_{n \ge 0}$ be a strategy for player A.  Assume that we have
constructed a fixed enumeration $\mathbb{Q}_{0}$ of the rational numbers in
$[a_{0}, b_{0}]$.  Let $e_{n} = \frac{b_{0} - a_{0}}{2^{n}}$ for $n \ge 1$, and define
$a_{1} = f_{0}(a_{0}, b_{0})$, $c = a_{1}$, $d = b_{0}$, $u = \frac{c + d}{2}$, and $I =
[c, d]$.  This defines the interval and the bounding sequence that we will use for the
construction of a generalized Cantor set.

Let $d_{1}$ be the first element of $\mathbb{Q}_{0}$ (the one of least index) such that
$c < d_{1} < u$, and let
\[
   c_{1} = f_{1}(a_{0}, b_{0}, c, d_{1}).
\]
Define $d_{0}$ to be the first element of $\mathbb{Q}_{0}$ such that $c < d_{0} < c_{1}$,
and let
\[
   c_{0} = f_{1}(a_{0}, b_{0}, c, d_{0}).
\]
Also define
\[
   I_{0} = [c_{0}, d_{0}];
\]
\[
   I_{1} = [c_{1}, d_{1}];
\]
\[
   u_{0} = \frac{c_{0} + d_{0}}{2};
\]
and
\[
   u_{1} = \frac{c_{1} + d_{1}}{2}.
\]
Then we have
\[
   c < c_{0} < d_{0} < c_{1} < d_{1} < u < d.
\]
Hence
\[
   I_{0}, I_{1} \subset [c, u] \subset I
\]
and
\[
   I_{0} \cap I_{1} = \emptyset.
\]
By the preceding inequality we have
\[
   0 < d_{i_{1}} - c_{i_{1}} < u - c = \frac{d - c}{2} = \frac{b_{0} - a_{1}}{2} <
   \frac{b_{0} - a_{0}}{2} = e_{1}
\]
for $i_{1} \in \{0, 1\}$.  Observe that the left endpoints of $I_{0}$ and $I_{1}$ are
defined by using player A's strategy based upon two rational numbers that might be chosen
by player B.  


Next we define
\[
   d_{1, 1} = \text{first element of }\mathbb{Q}_{0}\text{ such that } c_{1} < d_{1, 1} <
              u_{1};
\]
\[
   c_{1, 1} = f_{2}(a_{0}, b_{0}, c, d_{1}, c_{1}, d_{1, 1});
\]
\[
   d_{1, 0} = \text{first element of }\mathbb{Q}_{0}\text{ such that } c_{1} < d_{1, 0} <
              c_{1, 1};
\]
\[
   c_{1, 0} = f_{2}(a_{0}, b_{0}, c, d_{1}, c_{1}, d_{1, 0});
\]
\[
   d_{0, 1} = \text{first element of }\mathbb{Q}_{0}\text{ such that } c_{0} < d_{0, 1} <
              u_{0};
\]
\[
   c_{0, 1} = f_{2}(a_{0}, b_{0}, c, d_{0}, c_{0}, d_{0, 1});
\]
\[
   d_{0, 0} = \text{first element of }\mathbb{Q}_{0}\text{ such that } c_{0} < d_{0, 0} <
              c_{0, 1};
\]
and
\[
   c_{0, 0} = f_{2}(a_{0}, b_{0}, c, d_{0}, c_{0}, d_{0, 0}).
\]
Let 
\[
   I_{i_{1}, i_{2}} = [c_{i_{1}, i_{2}}, d_{i_{1}, i_{2}}]
\]
and
\[
   u_{i_{1}, i_{2}} = \frac{c_{i_{1}, i_{2}} + d_{i_{1}, i_{2}}}{2}
\]
for $i_{1}, i_{2} \in \{0, 1\}$.  Observe once again that the left endpoint of each
interval $I_{i_{1}, i_{2}}$ is defined by using player A's strategy based upon a rational
number that might be chosen by player B.

Now assume that for each $k$ with $2 \le k \le n$, the collections of real numbers
$\{c_{i_{1}, \dots, i_{k}} \mid i_{1}, \dots, i_{k} \in \{0, 1\}\}$, $\{d_{i_{1},
\dots, i_{k}} \mid i_{1}, \dots, i_{k} \in \{0, 1\}\}$, and $\{u_{i_{1}, \dots, i_{k}} \mid
i_{1}, \dots, i_{k} \in \{0, 1\}\}$ and the collection of closed intervals $\{I_{i_{1},
\dots, i_{k}} \mid i_{1}, \dots, i_{k} \in \{0, 1\}\}$ have been defined and satisfy the
following properties for all choices of $i_{1}, i_{2}, \dots, i_{k} \in [0, 1]$:
\begin{enumerate}
\item $d_{i_{1}, \dots, i_{k-1}, 1} = \text{first element of }\mathbb{Q}_{0}\text{
      belonging to }(c_{i_{1}, \dots, i_{k-1}}, u_{i_{1}, \dots, i_{k-1}})$;
\item $c_{i_{1}, \dots, i_{k-1}, 1} = f_{k}(a_{0}, b_{0}, c, d_{i_{1}}, c_{i_{1}}, \dots,
      d_{i_{1}, \dots, i_{k-1}}, c_{i_{1}, \dots, i_{k-1}}, d_{i_{1}, \dots, i_{k-1}, 1})$;
\item $d_{i_{1}, \dots, i_{k-1}, 0} = \text{first element of }\mathbb{Q}_{0}\text{
      belonging to } (c_{i_{1}, \dots, i_{k-1}}, c_{i_{1}, \dots, i_{k-1}, 1})$;
\item $c_{i_{1}, \dots, i_{k-1}, 0} = f_{k}(a_{0}, b_{0}, c, d_{i_{1}}, c_{i_{1}}, \dots,
      d_{i_{1}, \dots, i_{k-1}}, c_{i_{1}, \dots, i_{k-1}}, d_{i_{1}, \dots, i_{k-1}, 0})$;
\item $I_{i_{1}, \dots, i_{k}} = [c_{i_{1}, \dots, i_{k}}, d_{i_{1}, \dots, i_{k}}]$; and
\item $u_{i_{1}, \dots, i_{k}} = \frac{c_{i_{1}, \dots, i_{k}} + d_{i_{1}, \dots, i_{k}}}
      {2}$.
\end{enumerate}
Then define collections of real numbers $\{c_{i_{1}, \dots, i_{n+1}} \mid i_{1}, \dots,
i_{n+1} \in \{0, 1\}\}$, $\{d_{i_{1}, \dots, i_{n+1}} \mid i_{1}, \dots, i_{n+1} \in
\{0, 1\}\}$, and $\{u_{i_{1}, \dots, i_{n+1}} \mid i_{1}, \dots, i_{n+1} \in \{0, 1\}\}$,
as well as a collection of closed intervals $\{I_{i_{1}, \dots, i_{n+1}} \mid i_{1}, \dots,
i_{n+1} \in \{0, 1\}\}$ with the following properties for each choice of $i_{1}, i_{2},
\dots, i_{n+1} \in [0, 1]$:
\begin{enumerate}
\item $d_{i_{1}, \dots, i_{n}, 1} = \text{first element of }\mathbb{Q}_{0}\text{ belonging
      to }(c_{i_{1}, \dots, i_{n}}, u_{i_{1}, \dots, i_{n}})$;
\item $c_{i_{1}, \dots, i_{n}, 1} = f_{n+1}(a_{0}, b_{0}, c, d_{i_{1}}, c_{i_{1}}, \dots,
      d_{i_{1}, \dots, i_{n}}, c_{i_{1}, \dots, i_{n}}, d_{i_{1}, \dots, i_{n}, 1})$;
\item $d_{i_{1}, \dots, i_{n}, 0} = \text{first element of }\mathbb{Q}_{0}\text{ belonging
      to } (c_{i_{1}, \dots, i_{n}}, c_{i_{1}, \dots, i_{n}, 1})$;
\item $c_{i_{1}, \dots, i_{n}, 0} = f_{n+1}(a_{0}, b_{0}, c, d_{i_{1}}, c_{i_{1}}, \dots,
      d_{i_{1}, \dots, i_{n}}, c_{i_{1}, \dots, i_{n}}, d_{i_{1}, \dots, i_{n}, 0})$;
\item $I_{i_{1}, \dots, i_{n+1}} = [c_{i_{1}, \dots, i_{n+1}}, d_{i_{1}, \dots, i_{n+1}}]$;
      and
\item $u_{i_{1}, \dots, i_{n+1}} = \frac{c_{i_{1}, \dots, i_{n+1}} + d_{i_{1}, \dots,
      i_{n+1}}}{2}$.
\end{enumerate}
By induction the first set of six properties holds for the collections of numbers and
intervals defined for every $k \ge 2$ and every choice of $i_{1}, i_{2}, \dots, i_{k} \in
[0, 1]$.

Now fix $n \ge 2$, and let $k = n$.  Applying properties (1) - (4) together with the
inequality for $f_{n}$ in Definition~\ref{D:play}, we see that for all choices of $i_{1},
i_{2}, \dots, i_{n-1} \in [0, 1]$,
\[
   c_{i_{1}, \dots, i_{n-1}} < c_{i_{1}, \dots, i_{n-1}, 0} < d_{i_{1}, \dots, i_{n-1},
   0} < c_{i_{1}, \dots, i_{n-1}, 1}
\]
and
\[
   c_{i_{1}, \dots, i_{n-1}, 1} < d_{i_{1}, \dots, i_{n-1}, 1} < u_{i_{1}, \dots,
   i_{n-1}} < d_{i_{1}, \dots, i_{n-1}}.
\]
Hence
\[
   I_{i_{1}, \dots, i_{n-1}, 0}, I_{i_{1}, \dots, i_{n-1}, 1} \subset [c_{i_{1}, \dots,
   i_{n-1}}, u_{i_{1}, \dots, i_{n-1}}] \subset I_{i_{1}, \dots, i_{n-1}}
\]
and
\[
   I_{i_{1}, \dots, i_{n-1}, 0} \cap I_{i_{1}, \dots, i_{n-1}, 1} = \emptyset.
\]
Combining these relations with those of the construction for $n = 1$, we see that
conditions (2) and (3) of Definition~\ref{D:gc} are satisfied.

To see that condition (1) of that definition also holds, we proceed by induction.  Recall
that we have defined $e_{n} = \frac{b_{0} - a_{0}}{2^{n}}$ for $n \ge 1$, and we have
already shown that $0 < d_{i_{1}} - c_{i_{1}} < e_{1}$ for $i_{1} \in \{0, 1\}$.  Now
assume that $0 < d_{i_{1}, \dots, i_{n-1}} - c_{i_{1}, \dots, i_{n-1}} < e_{n-1}$ for all
choices of $i_{1}, \dots, i_{n-1} \in \{0, 1\}$, and let $i_{1}, \dots, i_{n} \in \{0,
1\}$ be given.  Then $d_{i_{1}, \dots, i_{n}} - c_{i_{1}, \dots, i_{n}} > 0$ and
\[
   d_{i_{1}, \dots, i_{n}} - c_{i_{1}, \dots, i_{n}} < u_{i_{1}, \dots, i_{n-1}} -
   c_{i_{1}, \dots, i_{n-1}} = \frac{d_{i_{1}, \dots, i_{n-1}} - c_{i_{1}, \dots, i_{n-1}}}
   {2} < \frac{e_{n-1}}{2} = e_{n}.
\]
Hence condition (1) of Definition~\ref{D:gc} holds.

Now let $C$ be the generalized Cantor set generated by $\mathcal{C} = \{I_{i_{1}, \dots,
i_{n}} \mid n \ge 1\text{ and }i_{1}, \dots, i_{n} \in \{0, 1\}\}$.  We claim that
$C \subset L((f_{n})_{n \ge 0})$.  To prove this, let $x \in C$ be given.  By Theorem~
\ref{T:gc} there is a unique sequence $(i_{n})_{n \ge 1}$ such that $\{x\} = \bigcap_{n \ge
1} I_{i_{1}, \dots, i_{n}}$.  We define a play of the Cantor game as follows.  First note
that $a_{0}$ and $b_{0}$ are already given.  Now let $a_{1} = c$, $b_{1} = d_{i_{1}}$, and
define $a_{n} = c_{i_{1}, \dots, i_{n-1}}$ and $b_{n} = d_{i_{1}, \dots, i_{n}}$ for all
$n \ge 2$.  Then $a_{1} = f_{0}(a_{0}, b_{0})$, $a_{2} = c_{i_{1}} = f_{1}(a_{0}, b_{0}, c,
d_{i_{1}}) = f_{1}(a_{0}, b_{0}, a_{1}, b_{1})$, and for $n \ge 3$, $a_{n} = c_{i_{1},
\dots, i_{n-1}} = f_{n-1}(a_{0}, b_{0}, c, d_{i_{1}}, c_{i_{1}}, \dots, d_{i_{1}, \dots,
i_{n-2}}, c_{i_{1}, \dots, i_{n-2}}, d_{i_{1}, \dots, i_{n-1}}) = f_{n-1}(a_{0}, b_{0},
a_{1}, b_{1}, a_{2}, \dots, b_{n-2}, a_{n-1}, b_{n-1}) = f_{n-1}(a_{0}, b_{0}, a_{1},
b_{1}, \dots, a_{n-1},\\ b_{n-1})$.  Thus the play of the game $((a_{n})_{n \ge 0},
(b_{n})_{n \ge 0})$ is consistent with the given strategy $(f_{n})_{n \ge 0}$ for player A.
Now observe that $\lim_{n \to \infty} a_{n} = \lim_{n \to \infty} c_{i_{1}, \dots, i_{n-1}}
= x$, so that $x \in L((f_{n})_{n \ge 0})$.  It follows that the generalized Cantor set we
have defined is a subset of player A's limit set.
\end{proof}

As a consequence of Theorem~\ref{T:a} we obtain a characterization of those target sets $S$
for which player A has a winning strategy.

\begin{theorem}\label{T:mainA}
In the Cantor game on $[a_{0}, b_{0}]$ with an uncountable target set $S$, the following
three statements are equivalent:
\begin{enumerate}
\item Player A has a winning strategy;
\item $S$ contains a generalized Cantor set;
\item $S$ contains a perfect set.
\end{enumerate}
\end{theorem}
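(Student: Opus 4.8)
The plan is to establish the cycle of implications $(1)\Rightarrow(2)\Rightarrow(3)\Rightarrow(1)$, with essentially all of the work concentrated in the last implication.

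The implication $(1)\Rightarrow(2)$ is immediate from the machinery already in place: if $(f_{n})_{n\ge 0}$ is a winning strategy for player A, then $L((f_{n}))\subset S$ by Definition~\ref{D:limset}, while Theorem~\ref{T:a} guarantees that $L((f_{n}))$ contains a generalized Cantor set $C$; hence $C\subset S$. The implication $(2)\Rightarrow(3)$ is equally quick, since every generalized Cantor set is perfect by part (3) of Theorem~\ref{T:gc}.

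The substance is $(3)\Rightarrow(1)$, which I would split into two halves: (a) a perfect set $P\subset S$ contains a generalized Cantor set $C$, and (b) a generalized Cantor set $C\subset S$ yields a winning strategy for A. For (a) I would run the usual splitting construction: since $P$ is perfect, every point is a limit point, so inside any closed interval $I_{s}$ meeting $P$ in an interior limit point one can find two disjoint closed subintervals $I_{s0},I_{s1}\subset I_{s}$, each again containing an interior limit point of $P$, with lengths controlled below the bounding sequence of Definition~\ref{D:gc}. To respect the ZF-only setting I would make every choice canonical by selecting endpoints as the first rationals (in a fixed enumeration $\mathbb{Q}_{0}$) with the required properties, exactly as in the proof of Theorem~\ref{T:a}. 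The nested intervals along each branch shrink to a point that is a limit of points of $P$ and hence lies in the closed set $P$, so the resulting generalized Cantor set satisfies $C\subset P\subset S$.

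The heart of the argument, and the step I expect to be the main obstacle, is (b): converting the static set $C\subset S$ into a dynamic strategy that survives B's squeezing. The key observation is that A should only ever move to a \emph{right limit point} of $C$, that is, a point $p\in C$ with $(p,p+\epsilon)\cap C\neq\emptyset$ for every $\epsilon>0$; call the set of these points $C^{+}$. Because each point of $C\setminus C^{+}$ begins a nonempty complementary gap of $C$, and distinct such gaps are disjoint, $C\setminus C^{+}$ injects into $\mathbb{Q}$ and is therefore countable, whereas $C$ and each relatively open piece $C\cap(u,v)$ are uncountable via the branch bijection of Theorem~\ref{T:gc} (which needs no choice). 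Consequently $C^{+}\cap(u,v)\neq\emptyset$ whenever $C\cap(u,v)\neq\emptyset$. I would then define A's strategy so that each $a_{n}$ is a point of $C^{+}$ lying strictly inside the live interval $(a_{n-1},b_{n-1})$, chosen canonically, for instance as the leftmost point of the first sub-Cantor-set $C\cap I_{s}$ that fits strictly inside. The right-limit property is exactly what defeats B: no matter how close B places $b_{n}$ above $a_{n}$, the interval $(a_{n},b_{n})$ still meets $C$, hence meets $C^{+}$, so A always has a legal reply. Since every $a_{n}\in C$ and $C$ is closed, the increasing limit $a=\lim_{n\to\infty}a_{n}$ lies in $C\subset S$, so the strategy is winning and $(1)$ holds. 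The only points needing care are the legality of the very first move, handled because $C\cap(a_{0},b_{0})$ is uncountable, and keeping every choice canonical so that the whole argument stays within ZF.
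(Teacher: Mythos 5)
Your proposal is correct, and for two of the three implications it coincides with the paper: $(1)\Rightarrow(2)$ via Theorem~\ref{T:a} applied to the limit set of a winning strategy, and $(2)\Rightarrow(3)$ via part (3) of Theorem~\ref{T:gc}. The divergence is in $(3)\Rightarrow(1)$: the paper disposes of this step with a citation to Baker's article, whereas you supply a complete argument. Your two halves are both sound. Half (a), extracting a generalized Cantor set from a perfect set by repeated splitting with endpoints taken as first-in-enumeration rationals, is a standard construction and your canonical choices do keep it inside ZF; the closedness of $P$ correctly delivers $C\subset P$. Half (b) is essentially a reconstruction of Baker's own strategy -- always play a point of the target that is a limit of the target from the right -- specialized to a generalized Cantor set: your counting argument that $C\setminus C^{+}$ is countable (gaps inject into $\mathbb{Q}$) while $C\cap(u,v)$ is uncountable whenever nonempty (it contains a full sub-Cantor set $C\cap I_{s}$ once some $I_{s}\subset(u,v)$, which happens because the interval lengths are bounded by $e_{n}\to 0$) is exactly what guarantees A always has a legal reply in $C^{+}$, and closedness of $C$ puts the limit in $S$. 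What your route buys is a self-contained proof whose choice-free character can be checked line by line, which is worth something given the paper's insistence on working in bare ZF up to this point; what the paper's route buys is brevity. The only cosmetic gap is that Definition~\ref{D:strat} requires each $f_{n-1}$ to be defined on all legal histories, including ones that cannot arise from your prescription; you should say explicitly that $f_{n-1}$ is defined arbitrarily (say, as the midpoint of the live interval) off the consistent histories, since only consistent plays contribute to the limit set.
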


\begin{proof}
(1) $\Rightarrow$ (2)
If player A has a winning strategy, then $S$ contains the limit set of that strategy.  By
Theorem~\ref{T:a}, this limit set contains a generalized Cantor set.  Thus $S$ contains a
generalized Cantor set.

(2) $\Rightarrow$ (3)
This is true according to Theorem~\ref{T:gc} since every generalized Cantor set is a
perfect set.

(3) $\Rightarrow$ (1)
This was proved in \cite[p.379]{mB07}.
\end{proof}

\subsection{Results for player B}\label{SS:b}
We now prove that similar results hold for player B's strategy, although the implications
of these results will be less far-reaching than the results for player A.  The proof of
our next result is very similar to the proof of Theorem~\ref{T:a} for player A.  The main
difference is that the current argument's construction proceeds upward from the midpoint of
each subinterval, while the construction in Theorem~\ref{T:a} proceeds downward.

\begin{theorem}\label{T:b}
In the Cantor game on $[a_{0}, b_{0}]$ with an uncountable target set $S$, the limit set of
any strategy for player B contains a generalized Cantor set.
\end{theorem}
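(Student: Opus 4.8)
The plan is to mirror the construction in the proof of Theorem~\ref{T:a}, interchanging the roles of the two players' endpoints. In that proof the \emph{free} moves were player B's choices $b_n$ (realized as the right endpoints $d_{i_1,\dots,i_n}$ and selected as rationals in order to branch the tree), while the \emph{determined} moves were player A's responses dictated by the strategy (the left endpoints $c_{i_1,\dots,i_n}$), and every child interval was pushed into the \emph{lower} half $[c,u]$ of its parent. For a strategy $(g_n)_{n\ge 0}$ for player B the situation is reflected: since player A now moves first and freely on each turn, the free moves are player A's choices $a_n$, which I will realize as the \emph{left} endpoints $c_{i_1,\dots,i_n}$ and select as rationals to create branching, while the determined moves are player B's responses $b_n = g_{n-1}(\dots)$, realized as the \emph{right} endpoints $d_{i_1,\dots,i_n}$. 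Correspondingly, each child interval will be pushed into the \emph{upper} half $[u,d]$ of its parent.

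Concretely, I would fix an enumeration $\mathbb{Q}_0$ of the rationals in $[a_0,b_0]$, set $e_n=(b_0-a_0)/2^n$, and build the tree of intervals as follows. Writing a parent as $I_{i_1,\dots,i_n}=[c_{i_1,\dots,i_n},d_{i_1,\dots,i_n}]$ with midpoint $u_{i_1,\dots,i_n}$, I let $c_{i_1,\dots,i_n,1}$ be the first element of $\mathbb{Q}_0$ in $(u_{i_1,\dots,i_n},d_{i_1,\dots,i_n})$ and set $d_{i_1,\dots,i_n,1}=g_n(a_0,b_0,\dots,c_{i_1,\dots,i_n},d_{i_1,\dots,i_n},c_{i_1,\dots,i_n,1})$; then I let $c_{i_1,\dots,i_n,0}$ be the first element of $\mathbb{Q}_0$ in $(d_{i_1,\dots,i_n,1},d_{i_1,\dots,i_n})$ and set $d_{i_1,\dots,i_n,0}=g_n(\dots,c_{i_1,\dots,i_n,0})$. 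The defining inequality for $g_n$ in Definition~\ref{D:strat} guarantees $c_{i_1,\dots,i_n,j}<d_{i_1,\dots,i_n,j}<d_{i_1,\dots,i_n}$, which yields
\[
   u_{i_1,\dots,i_n}<c_{i_1,\dots,i_n,1}<d_{i_1,\dots,i_n,1}<c_{i_1,\dots,i_n,0}<d_{i_1,\dots,i_n,0}<d_{i_1,\dots,i_n}.
\]
The level-one intervals are built the same way inside $[a_0,b_0]$ with $u=(a_0+b_0)/2$, except that here the two branches genuinely differ in their first coordinate, since player A's opening move is free rather than forced by a function $f_0$. From the displayed chain both children lie in the upper half $[u_{i_1,\dots,i_n},d_{i_1,\dots,i_n}]$ and are disjoint, giving conditions (2) and (3) of Definition~\ref{D:gc}, while each child has length less than $d_{i_1,\dots,i_n}-u_{i_1,\dots,i_n}=\tfrac12(d_{i_1,\dots,i_n}-c_{i_1,\dots,i_n})$, giving condition (1) by the same induction as before against the bound $e_n$.

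Finally, let $C$ be the generalized Cantor set generated by these intervals. Given $x\in C$, Theorem~\ref{T:gc} supplies a unique branch $(i_n)_{n\ge 1}$ with $\{x\}=\bigcap_{n\ge 1}I_{i_1,\dots,i_n}$, and I recover a play by setting $a_n=c_{i_1,\dots,i_n}$ and $b_n=d_{i_1,\dots,i_n}$. The nesting into upper halves makes $(a_n)$ strictly increasing and $(b_n)$ strictly decreasing with $a_{n-1}<a_n<b_n<b_{n-1}$, so this is a legitimate play whose history lies in the domain of each function $g_{n-1}$; by construction $b_n=g_{n-1}(a_0,b_0,\dots,a_{n-1},b_{n-1},a_n)$, so the play is consistent with $(g_n)$ in the sense of Definition~\ref{D:play}. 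Since $\lim_{n\to\infty}a_n=\lim_{n\to\infty}c_{i_1,\dots,i_n}=x$ by Theorem~\ref{T:gc}, we get $x\in L((g_n))$, whence $C\subset L((g_n))$. The only real care needed beyond transcribing the proof of Theorem~\ref{T:a} is the reversal of the child ordering (child $0$ now lies \emph{above} child $1$, with both in the upper half) and the handling of the free opening move, so that the histories fed into the functions $g_n$ are valid; I expect this bookkeeping, rather than any new idea, to be the main obstacle.
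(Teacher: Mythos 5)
Your proposal is correct and follows essentially the same route as the paper's proof: choose player A's moves as canonical rationals (via a fixed enumeration) to branch the tree, let the strategy $(g_n)$ supply the right endpoints, nest each pair of children disjointly in the upper half of the parent to verify Definition~\ref{D:gc}, and then read off a consistent play $a_n=c_{i_1,\dots,i_n}$, $b_n=d_{i_1,\dots,i_n}$ converging to any given point of the resulting generalized Cantor set. The only difference is an immaterial swap of which child is labeled $0$ and which is labeled $1$.
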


\begin{proof}
Suppose that the uncountable target set $S$ for the Cantor game on $[a_{0}, b_{0}]$ is
given, and let $(g_{n})_{n \ge 0}$ be a strategy for player B.  Assume that we have
constructed a fixed enumeration $\mathbb{Q}_{0}$ of the rational numbers in
$[a_{0}, b_{0}]$.  Let $e_{n} = \frac{b_{0} - a_{0}}{2^{n}}$ for $n \ge 1$, and define $c =
a_{0}$, $d = b_{0}$, $u = \frac{c + d}{2}$, and $I = [c, d]$.  This defines the interval
and the bounding sequence that we will use for the construction of a generalized Cantor
set.

Let $c_{0}$ be the first element of $\mathbb{Q}_{0}$ (the one of least index) such that
$u < c_{0} < d$, and let
\[
   d_{0} = g_{0}(a_{0}, b_{0}, c_{0}).
\]
Define $c_{1}$ to be the first element of $\mathbb{Q}_{0}$ such that $d_{0} < c_{1} < d$,
and let
\[
   d_{1} = g_{0}(a_{0}, b_{0}, c_{1}).
\]
Also define
\[
   I_{0} = [c_{0}, d_{0}];
\]
\[
   I_{1} = [c_{1}, d_{1}];
\]
\[
   u_{0} = \frac{c_{0} + d_{0}}{2};
\]
and
\[
   u_{1} = \frac{c_{1} + d_{1}}{2}.
\]
Then we have
\[
   c < u < c_{0} < d_{0} < c_{1} < d_{1} < d.
\]
Hence
\[
   I_{0}, I_{1} \subset [u, d] \subset I
\]
and
\[
   I_{0} \cap I_{1} = \emptyset.
\]
By the preceding inequality we have
\[
   0 < d_{i_{1}} - c_{i_{1}} < d - u = \frac{d - c}{2} = \frac{b_{0} - a_{0}}{2} = e_{1}
\]
for $i_{1} \in \{0, 1\}$.  Observe that the right endpoints of $I_{0}$ and $I_{1}$ are
defined by using player B's strategy based upon two rational numbers that might be chosen
by player A.

Next we define
\[
   c_{0, 0} = \text{first element of }\mathbb{Q}_{0}\text{ such that } u_{0} < c_{0, 0} <
              d_{0};
\]
\[
   d_{0, 0} = g_{1}(a_{0}, b_{0}, c_{0}, d_{0}, c_{0, 0});
\]
\[
   c_{0, 1} = \text{first element of }\mathbb{Q}_{0}\text{ such that } d_{0, 0} < c_{0, 1}
              < d_{0};
\]
\[
   d_{0, 1} = g_{1}(a_{0}, b_{0}, c_{0}, d_{0}, c_{0, 1});
\]
\[
   c_{1, 0} = \text{first element of }\mathbb{Q}_{0}\text{ such that } u_{1} < c_{1, 0}
              < d_{1};
\]
\[
   d_{1, 0} = g_{1}(a_{0}, b_{0}, c_{1}, d_{1}, c_{1, 0});
\]
\[
   c_{1, 1} = \text{first element of }\mathbb{Q}_{0}\text{ such that } d_{1, 0} < c_{1, 1}
              < d_{1};
\]
and
\[
   d_{1, 1} = g_{1}(a_{0}, b_{0}, c_{1}, d_{1}, c_{1, 1}).
\]
Let 
\[
   I_{i_{1}, i_{2}} = [c_{i_{1}, i_{2}}, d_{i_{1}, i_{2}}]
\]
and
\[
   u_{i_{1}, i_{2}} = \frac{c_{i_{1}, i_{2}} + d_{i_{1}, i_{2}}}{2}
\]
for $i_{1}, i_{2} \in \{0, 1\}$.  Observe once again that the right endpoint of each
interval $I_{i_{1}, i_{2}}$ is defined by using player B's strategy based upon a rational
number that might be chosen by player A.

Now assume that for each $k$ with $2 \le k \le n$, the collections of real numbers
$\{c_{i_{1}, \dots, i_{k}} \mid i_{1}, \dots, i_{k} \in \{0, 1\}\}$, $\{d_{i_{1},
\dots, i_{k}} \mid i_{1}, \dots, i_{k} \in \{0, 1\}\}$, and $\{u_{i_{1}, \dots, i_{k}} \mid
i_{1}, \dots, i_{k} \in \{0, 1\}\}$ and the collection of closed intervals $\{I_{i_{1},
\dots, i_{k}} \mid i_{1}, \dots, i_{k} \in \{0, 1\}\}$ have been defined and satisfy the
following properties for all choices of $i_{1}, i_{2}, \dots, i_{k} \in [0, 1]$:
\begin{enumerate}
\item $c_{i_{1}, \dots, i_{k-1}, 0} = \text{first element of }\mathbb{Q}_{0}\text{
      belonging to }(u_{i_{1}, \dots, i_{k-1}}, d_{i_{1}, \dots, i_{k-1}})$;
\item $d_{i_{1}, \dots, i_{k-1}, 0} = g_{k-1}(a_{0}, b_{0}, c_{i_{1}}, d_{i_{1}}, \dots,
      c_{i_{1}, \dots, i_{k-1}}, d_{i_{1}, \dots, i_{k-1}}, c_{i_{1}, \dots, i_{k-1}, 0})$;
\item $c_{i_{1}, \dots, i_{k-1}, 1} = \text{first element of }\mathbb{Q}_{0}\text{
      belonging to } (d_{i_{1}, \dots, i_{k-1}, 0}, d_{i_{1}, \dots, i_{k-1}})$;
\item $d_{i_{1}, \dots, i_{k-1}, 1} = g_{k-1}(a_{0}, b_{0}, c_{i_{1}}, d_{i_{1}}, \dots,
      c_{i_{1}, \dots, i_{k-1}}, d_{i_{1}, \dots, i_{k-1}}, c_{i_{1}, \dots, i_{k-1}, 1})$;
\item $I_{i_{1}, \dots, i_{k}} = [c_{i_{1}, \dots, i_{k}}, d_{i_{1}, \dots, i_{k}}]$; and
\item $u_{i_{1}, \dots, i_{k}} = \frac{c_{i_{1}, \dots, i_{k}} + d_{i_{1}, \dots, i_{k}}}
      {2}$.
\end{enumerate}
Then define collections of real numbers $\{c_{i_{1}, \dots, i_{n+1}} \mid i_{1}, \dots,
i_{n+1} \in \{0, 1\}\}$, $\{d_{i_{1}, \dots, i_{n+1}} \mid i_{1}, \dots, i_{n+1} \in
\{0, 1\}\}$, and $\{u_{i_{1}, \dots, i_{n+1}} \mid i_{1}, \dots, i_{n+1} \in \{0, 1\}\}$,
as well as a collection of closed intervals $\{I_{i_{1}, \dots, i_{n+1}} \mid i_{1}, \dots,
i_{n+1} \in \{0, 1\}\}$ with the following properties for each choice of $i_{1}, i_{2},
\dots, i_{n+1} \in [0, 1]$:
\begin{enumerate}
\item $c_{i_{1}, \dots, i_{n}, 0} = \text{first element of }\mathbb{Q}_{0}\text{ belonging
      to }(u_{i_{1}, \dots, i_{n}}, d_{i_{1}, \dots, i_{n}})$;
\item $d_{i_{1}, \dots, i_{n}, 0} = g_{n}(a_{0}, b_{0}, c_{i_{1}}, d_{i_{1}}, \dots,
      c_{i_{1}, \dots, i_{n}}, d_{i_{1}, \dots, i_{n}}, c_{i_{1}, \dots, i_{n}, 0})$;
\item $c_{i_{1}, \dots, i_{n}, 1} = \text{first element of }\mathbb{Q}_{0}\text{ belonging
      to } (d_{i_{1}, \dots, i_{n}, 0}, d_{i_{1}, \dots, i_{n}})$;
\item $d_{i_{1}, \dots, i_{n}, 1} = g_{n}(a_{0}, b_{0}, c_{i_{1}}, d_{i_{1}}, \dots,
      c_{i_{1}, \dots, i_{n}}, d_{i_{1}, \dots, i_{n}}, c_{i_{1}, \dots, i_{n}, 1})$;
\item $I_{i_{1}, \dots, i_{n+1}} = [c_{i_{1}, \dots, i_{n+1}}, d_{i_{1}, \dots, i_{n+1}}]$;
and
\item $u_{i_{1}, \dots, i_{n+1}} = \frac{c_{i_{1}, \dots, i_{n+1}} + d_{i_{1}, \dots,
      i_{n+1}}}{2}$.
\end{enumerate}
By induction the first set of six properties holds for the collections of numbers and
intervals defined for every $k \ge 2$ and every choice of $i_{1}, i_{2}, \dots, i_{k} \in
[0, 1]$.

Now fix $n \ge 2$, and let $k = n$.  Applying properties (1) - (4) together with the
inequality for $g_{n}$ in Definition~\ref{D:play}, we see that for all choices of $i_{1},
i_{2}, \dots, i_{n-1} \in [0, 1]$,
\[
   c_{i_{1}, \dots, i_{n-1}} < u_{i_{1}, \dots, i_{n-1}} < c_{i_{1}, \dots, i_{n-1}, 0} <
   d_{i_{1}, \dots, i_{n-1}, 0}
\]
and
\[
   d_{i_{1}, \dots, i_{n-1}, 0} < c_{i_{1}, \dots, i_{n-1}, 1} < d_{i_{1}, \dots, i_{n-1},
   1} < d_{i_{1}, \dots, i_{n-1}}.
\]
Hence
\[
   I_{i_{1}, \dots, i_{n-1}, 0}, I_{i_{1}, \dots, i_{n-1}, 1} \subset [u_{i_{1}, \dots,
   i_{n-1}}, d_{i_{1}, \dots, i_{n-1}}] \subset I_{i_{1}, \dots, i_{n-1}}
\]
and
\[
   I_{i_{1}, \dots, i_{n-1}, 0} \cap I_{i_{1}, \dots, i_{n-1}, 1} = \emptyset.
\]
Combining these relations with those of the construction for $n = 1$, we see that
conditions (2) and (3) of Definition~\ref{D:gc} are satisfied.

To see that condition (1) of that definition also holds, we proceed by induction.  Recall
that we have defined $e_{n} = \frac{b_{0} - a_{0}}{2^{n}}$ for $n \ge 1$, and we have
already shown that $0 < d_{i_{1}} - c_{i_{1}} < e_{1}$ for $i_{1} \in \{0, 1\}$.  Now
assume that $0 < d_{i_{1}, \dots, i_{n-1}} - c_{i_{1}, \dots, i_{n-1}} < e_{n-1}$ for all
choices of $i_{1}, \dots, i_{n-1} \in \{0, 1\}$, and let $i_{1}, \dots, i_{n} \in \{0,
1\}$ be given.  Then $d_{i_{1}, \dots, i_{n}} - c_{i_{1}, \dots, i_{n}} > 0$ and
\[
   d_{i_{1}, \dots, i_{n}} - c_{i_{1}, \dots, i_{n}} < d_{i_{1}, \dots, i_{n-1}} -
   u_{i_{1}, \dots, i_{n-1}} = \frac{d_{i_{1}, \dots, i_{n-1}} - c_{i_{1}, \dots, i_{n-1}}}
   {2} < \frac{e_{n-1}}{2} = e_{n}.
\]
Hence condition (1) of Definition~\ref{D:gc} holds.

Now let $D$ be the generalized Cantor set generated by $\mathcal{D} = \{I_{i_{1}, \dots,
i_{n}} \mid n \ge 1\text{ and }i_{1}, \dots, i_{n} \in \{0, 1\}\}$.  We claim that
$D \subset L((g_{n})_{n \ge 0})$.  To prove this, let $x \in D$ be given.  By Theorem~
\ref{T:gc} there is a unique sequence $(i_{n})_{n \ge 1}$ such that $\{x\} = \bigcap_{n \ge
1} I_{i_{1}, \dots, i_{n}}$.  We define a play of the Cantor game as follows.  First note
that $a_{0}$ and $b_{0}$ are fixed.  Now let $a_{n} = c_{i_{1}, \dots, i_{n}}$ and $b_{n} =
d_{i_{1}, \dots, i_{n}}$ for all $n \ge 1$.  Then $b_{1} = d_{i_{1}} = g_{0}(c, d,
c_{i_{1}}) = g_{0}(a_{0}, b_{0}, a_{1})$, and for $n \ge 2$, $b_{n} = d_{i_{1}, \dots,
i_{n}} = g_{n-1}(c, d, c_{i_{1}}, d_{i_{1}}, \dots, c_{i_{1}, \dots, i_{n-1}}, d_{i_{1},
\dots, i_{n-1}}, c_{i_{1}, \dots, i_{n}}) = g_{n-1}(a_{0}, b_{0}, a_{1}, b_{1}, \dots,
a_{n-1}, b_{n-1}, a_{n})$.  Thus the play of the game $((a_{n})_{n \ge 0},\\ (b_{n})_{n
\ge 0})$ is consistent with the given strategy $(g_{n})_{n \ge 0}$ for player B.  Because
$\lim_{n \to \infty} a_{n} = \lim_{n \to \infty} c_{i_{1}, \dots, i_{n}} = x$, it follows
that $x \in L((g_{n})_{n \ge 0})$.  Thus player B's limit set contains the generalized
Cantor set $D$.
\end{proof}

The following lemma will be useful in several contexts.

\begin{lemma}\label{L:bern3}
Let $S$ be an arbitrary subset of the closed interval $[a, b]$, where $a < b$.  If $S$ does
not contain a perfect set, then $[a, b] - S$ is dense in $[a, b]$.
\end{lemma}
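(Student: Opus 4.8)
The plan is to prove the contrapositive. Assuming that $[a,b] - S$ is \emph{not} dense in $[a,b]$, I would produce a perfect set contained in $S$, contradicting the hypothesis. This is the natural direction, since ``not dense'' hands us an open region that avoids $[a,b]-S$ and therefore sits inside $S$, and such a region is large enough to contain a perfect set.

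First I would unpack the negation of density. If $[a,b] - S$ fails to be dense in $[a,b]$, then some point of $[a,b]$ is not in the closure of $[a,b]-S$; that is, there exist $x \in [a,b]$ and $\epsilon > 0$ such that the relative neighborhood $(x - \epsilon, x + \epsilon) \cap [a,b]$ contains no point of $[a,b] - S$. Equivalently, every element of $[a,b]$ lying within $\epsilon$ of $x$ belongs to $S$, so $(x - \epsilon, x + \epsilon) \cap [a,b] \subseteq S$.

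Next I would extract a nondegenerate closed interval from this neighborhood. Because $[a,b]$ is itself a nondegenerate interval, the relatively open set $(x - \epsilon, x + \epsilon) \cap [a,b]$ contains a nondegenerate closed interval $[c,d]$ with $c < d$; the only subtlety is the bookkeeping when $x$ is one of the endpoints $a$ or $b$, in which case one simply chooses $[c,d]$ slightly interior to $[a,b]$ on the appropriate side of $x$. By the previous step, $[c,d] \subseteq S$.

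Finally I would observe that $[c,d]$ is a perfect set: it is closed, and every one of its points is a limit point of the interval --- including the two endpoints, which are approached from within $[c,d]$. Hence $S$ contains the perfect set $[c,d]$, contradicting the assumption that $S$ contains no perfect set, and the contrapositive is established. The whole argument rests only on elementary order-topological facts about $\mathbb{R}$ and invokes no form of choice, so it is valid under ZF alone, as required in this section. I do not expect any genuine obstacle here; the single point needing a little care is the endpoint case in the extraction of $[c,d]$, which is routine.
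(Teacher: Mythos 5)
Your proposal is correct and is essentially the paper's own argument, merely phrased as a contrapositive rather than a proof by contradiction: both arguments take a point outside the closure of $[a,b]-S$, obtain a relative neighborhood contained in $S$, extract a nondegenerate closed subinterval (handling the endpoint cases $x=a$ and $x=b$ separately), and note that such an interval is a perfect set. No gaps; nothing further to add.
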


\begin{proof}
Suppose that $S$ does not contain a perfect set.  Let $x \in [a, b]$ be given, and suppose,
to reach a contradiction, that $x \notin \overline{[a, b] - S}$.  Assume first that $a < x
< b$.  Then there exists $\epsilon > 0$ such that $(x - \epsilon, x + \epsilon) \subset
(a, b)$ and $(x - \epsilon, x + \epsilon) \cap ([a, b] - S) = \emptyset$.  Hence
$(x - \epsilon, x + \epsilon) \subset S$.  This implies that $S$ contains the perfect set
$[x - \frac{\epsilon}{2}, x + \frac{\epsilon}{2}]$, contrary to our assumption.  Now assume
that $x = a$.  Then there exists $\epsilon > 0$ such that $[a, a + \epsilon) \cap ([a, b] -
S) = \emptyset$.  This implies that $S$ contains the perfect set $[a + \frac{\epsilon}{4},
a + \frac{3\epsilon}{4})]$, contrary to our assumption.  The case in which $x = b$ is
handled similarly.  It follows that $x \in \overline{[a, b] - S}$.  This proves that
$\overline{[a, b] - S} = [a, b]$.
\end{proof}

As a consequence of Theorem~\ref{T:b} and Lemma~\ref{L:bern3} we obtain necessary
conditions for player B to have a winning strategy.

\begin{theorem}\label{T:mainB}
In the Cantor game on $[a_{0}, b_{0}]$ with an uncountable target set $S$, if player B has
a winning strategy, then:
\begin{enumerate}
\item $[a_{0}, b_{0}] - S$ contains a generalized Cantor set;
\item $[a_{0}, b_{0}] - S$ contains a perfect set;
\item $[a_{0}, b_{0}] - S$ is dense in $[a_{0}, b_{0}]$.
\end{enumerate}
\end{theorem}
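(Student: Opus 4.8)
The plan is to harvest parts (1) and (2) almost immediately from the machinery already in place and to reserve the real work for part (3). Suppose player B has a winning strategy $(g_n)_{n\ge 0}$. By the definition of a winning strategy (Definition~\ref{D:limset}) this means $L((g_n))\subset [a_0,b_0]-S$. Theorem~\ref{T:b} guarantees that $L((g_n))$ contains a generalized Cantor set $D$; hence $D\subset L((g_n))\subset[a_0,b_0]-S$, which is exactly statement (1). Statement (2) then follows at once, since by part (3) of Theorem~\ref{T:gc} every generalized Cantor set is a perfect set, so $D$ is a perfect set contained in $[a_0,b_0]-S$.

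For part (3) I would invoke Lemma~\ref{L:bern3}, which says that whenever a subset of $[a_0,b_0]$ fails to contain a perfect set, its complement is dense. Applying the lemma to $S$ itself, it suffices to prove that \emph{$S$ does not contain a perfect set}; the density of $[a_0,b_0]-S$ then follows. To establish this, I would argue by contradiction: if $S$ contained a perfect set, then by the implication (3)$\Rightarrow$(1) of Theorem~\ref{T:mainA} player A would also possess a winning strategy $(f_n)_{n\ge 0}$, and I claim that A and B cannot both have winning strategies.

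The crux --- and the step I expect to be the main obstacle --- is justifying that claim. The key observation is that an A-strategy $(f_n)$ and a B-strategy $(g_n)$ together determine a single play: starting from $a_0,b_0$ one sets $a_1=f_0(a_0,b_0)$, then $b_1=g_0(a_0,b_0,a_1)$, then $a_2=f_1(a_0,b_0,a_1,b_1)$, and so on, alternately consulting the two strategies. The resulting play $((a_n),(b_n))$ is consistent with $(f_n)$ and with $(g_n)$ simultaneously, so its limit $a=\lim_{n\to\infty} a_n$ lies in both $L((f_n))$ and $L((g_n))$. If both strategies were winning we would have $a\in S$ and $a\in[a_0,b_0]-S$, an impossibility. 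Hence $S$ cannot contain a perfect set, and Lemma~\ref{L:bern3} completes the proof of (3). The only subtlety to check carefully is that this interleaved sequence of moves genuinely satisfies the domain and inequality constraints of Definition~\ref{D:strat} at every stage, so that each function application is legitimate and the resulting object is a bona fide play.
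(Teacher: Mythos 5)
Your proposal is correct and follows essentially the same route as the paper: parts (1) and (2) come from Theorem~\ref{T:b} together with Theorem~\ref{T:gc}, and part (3) from Lemma~\ref{L:bern3} via Theorem~\ref{T:mainA}. The one difference is that where the paper simply asserts that player A cannot also have a winning strategy, you supply the standard justification by interleaving the two strategies into a single play consistent with both; that argument is sound (the domain and inequality constraints of Definition~\ref{D:strat} are preserved inductively at each alternation) and fills a step the paper leaves implicit.
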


\begin{proof}
(1)
If player B has a winning strategy, then the complement of $S$ contains the limit set of
that strategy.  By Theorem~\ref{T:b}, this limit set contains a generalized Cantor set.
Thus the complement of $S$ contains a generalized Cantor set.

(2)
This is true according to Theorem~\ref{T:gc} since every generalized Cantor set is a
perfect set.

(3)
Since player B has a winning strategy, player A cannot have a winning strategy.  Then by
Theorem~\ref{T:mainA}, $S$ does not contain a perfect set.  It now follows from Lemma~
\ref{L:bern3} that $[a_{0}, b_{0}] - S$ is dense in $[a_{0}, b_{0}]$.
\end{proof}

\subsection{The Perfect Set Property}\label{SS:perf}
We now introduce the Perfect Set Property and show that it has a close relationship to the
Cantor game.

\begin{definition}\label{D:perf}
Let $S \subset \mathbb{R}$.  We say that $S$ has the \emph{perfect set property} if and
only if $S$ is countable or there exists a nonempty perfect set $P \subset S$.
Equivalently, if $S$ is uncountable, then $S$ must contain a nonempty perfect set.
\end{definition}

Up to this point we have been assuming only the eight axioms of Zermelo-Fraenkel set theory
(ZF); however, in our next theorem we will need to know that a countable union of countable
sets is countable.  Since this result is not known to be provable in ZF, we will need to
assume an additional axiom of set theory in order to use it.  One such axiom is the Axiom
of Countable Choice, AC$_{\omega}$, which is called the Countable Axiom of Choice by some
authors.  A precise statement of this axiom and some of its consequences can be found in
Jech's monograph \cite[Chapter 5]{tJ03}.

\begin{theorem}\label{T:perf1}
Assuming the Axiom of Countable Choice in addition to ZF (ZF + AC$_{\omega}$), the
following two statements are equivalent:
\begin{enumerate}
\item Every subset of $\mathbb{R}$ has the perfect set property.
\item For every $a_{0}, b_{0} \in \mathbb{R}$ with $a_{0} < b_{0}$ and for every
uncountable set $S \subset [a_{0}, b_{0}]$, player A has a winning strategy in the Cantor
game on $[a_{0}, b_{0}]$ with target set $S$. (That is, player A always has a winning
strategy.)
\end{enumerate} 
\end{theorem}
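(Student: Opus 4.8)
The plan is to prove the two implications separately, in each case routing the argument through Theorem~\ref{T:mainA}, which already establishes that for an uncountable target set, player A has a winning strategy if and only if that set contains a perfect set (equivalently, a generalized Cantor set). The only genuinely new content lies in moving between a single bounded interval, where the Cantor game is defined, and the whole real line, where the perfect set property is asserted; this passage is exactly where the Axiom of Countable Choice will be needed.

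For the direction (1) $\Rightarrow$ (2), I would fix $a_{0} < b_{0}$ and an uncountable set $S \subset [a_{0}, b_{0}]$. Regarding $S$ as a subset of $\mathbb{R}$, hypothesis (1) says that $S$ has the perfect set property, so by Definition~\ref{D:perf} the uncountability of $S$ forces $S$ to contain a nonempty perfect set. The implication (3) $\Rightarrow$ (1) of Theorem~\ref{T:mainA} then produces a winning strategy for player A in the game on $[a_{0}, b_{0}]$ with target $S$. This direction uses no choice principle at all.

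For the direction (2) $\Rightarrow$ (1), let $T \subset \mathbb{R}$ be arbitrary; I must show that $T$ is countable or contains a nonempty perfect set. If $T$ is countable we are finished, so assume $T$ is uncountable and write $T = \bigcup_{n \ge 1}(T \cap [-n, n])$. This is a countable union, and here AC$_{\omega}$ enters: since a countable union of countable sets is countable, were every $T \cap [-n, n]$ countable then $T$ itself would be countable, contrary to assumption. Hence some $T \cap [-n, n]$ is uncountable. Setting $a_{0} = -n$, $b_{0} = n$, and $S = T \cap [-n, n]$, hypothesis (2) gives player A a winning strategy for the game on $[a_{0}, b_{0}]$ with target $S$, and the implication (1) $\Rightarrow$ (3) of Theorem~\ref{T:mainA} shows that $S$ contains a perfect set $P$, which is nonempty since it may be taken to be a generalized Cantor set. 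As $P \subset S \subset T$, the set $T$ contains a nonempty perfect set, establishing the perfect set property for $T$.

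The main obstacle — indeed the only delicate point — is this reduction from $\mathbb{R}$ to a bounded interval in the direction (2) $\Rightarrow$ (1). The hypothesis only supplies winning strategies for games on bounded intervals, so an uncountable $T \subset \mathbb{R}$ must first be localized to some $[-n, n]$ on which $T$ remains uncountable, and the existence of such an $n$ is precisely the statement that a countable union of countable sets cannot be uncountable. This is not a theorem of ZF alone, which is why the hypothesis AC$_{\omega}$ is invoked; everything else is bookkeeping handled by Theorem~\ref{T:mainA}.
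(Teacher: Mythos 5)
Your proposal is correct and follows essentially the same route as the paper: the forward direction is the same appeal to Theorem~\ref{T:mainA}, and the reverse direction uses the same localization of an uncountable $T \subset \mathbb{R}$ to a bounded interval via a countable decomposition (you use $[-n,n]$ where the paper uses $[n,n+1]$ for $n \in \mathbb{Z}$, an immaterial difference), with AC$_{\omega}$ invoked at exactly the same point to rule out all pieces being countable.
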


\begin{proof}
(1) $\Rightarrow$ (2)
Suppose that every subset of $\mathbb{R}$ has the perfect set property.  Let $a_{0}, b_{0}
\in \mathbb{R}$ with $a_{0} < b_{0}$, and let $S$ be any uncountable subset of $[a_{0},
b_{0}]$.  Then $S$ contains a nonempty perfect set, and so by Theorem~\ref{T:mainA}, player
A has a winning strategy in the Cantor game on $[a_{0}, b_{0}]$ with target set $S$.

(2) $\Rightarrow$ (1)
Now assume that condition (2) holds.  Let $S \subset \mathbb{R}$, and assume that $S$ is
uncountable.  If $S \cap [n, n + 1]$ were countable for every $n \in Z$, then $S = 
\cup_{n \in \mathbb{Z}} S \cap [n, n + 1]$ would be countable.  Hence $S \cap [n, n + 1]$
must be uncountable for some $n \in \mathbb{Z}$.  By condition (2), in the Cantor game on
$[n, n + 1]$ with target set $S \cap [n, n + 1]$, player A has a winning strategy.  Hence,
by Theorem~\ref{T:mainA}, $S \cap [n, n + 1]$ contains a nonempty perfect set, so that $S$
also contains a perfect set.  Thus every subset of $\mathbb{R}$ has the perfect set
property.
\end{proof}

\section{The Cantor Game and the Axiom of Determinacy}\label{S:AD}
Prior to Section~\ref{SS:perf} we assumed only the eight axioms of Zermelo-Fraenkel set
theory (ZF), and in that section we added the assumption of the Axiom of Countable Choice
(AC$_{\omega}$).  In order to provide definitive answers to Baker's three questions, we
must assume axioms in addition to those of ZF + AC$_{\omega}$.  As we will see, the answers
to these questions depend on which additional axioms we assume.

In this brief section we will assume that the Axiom of Determinacy (AD) holds in addition
to the Zermelo-Fraenkel axioms, and thus the results will be valid in the ZF + AD system.
In the following section we will assume that the Axiom of Choice holds in addition to the
Zermelo-Fraenkel axioms, and the results in that section will be valid in the ZFC system.
Since the Axiom of Countable Choice (AC$_{\omega}$) is a consequence of the Axiom of
Determinacy and of the Axiom of Choice, all of our previous results will continue to hold
in this section and the following one.  In Chapter 5 and Chapter 33 of his treatise
\cite{tJ03}, Jech offers precise statements of all of these axioms and many of their
consequences.  The most significant consequence for our purposes is the following.

\begin{theorem}\label{T:perf2}
Assuming the Axiom of Determinacy in addition to ZF (ZF + AD), every subset of $\mathbb{R}$
has the perfect set property.
\end{theorem}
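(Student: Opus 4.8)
The plan is to prove that under ZF + AD, every uncountable subset of $\mathbb{R}$ contains a nonempty perfect set, by associating to each set $S$ an infinite two-player game whose determinacy forces the desired dichotomy. This is a classical result (the perfect set property follows from determinacy), and the standard vehicle is the \emph{perfect set game} (sometimes called the $*$-game of Morton Davis). The key idea is to set up a game in which one player, attempting to ``pin down'' a point of $S$, wins if and only if the play lands in $S$, while the opponent tries to escape $S$; one then shows that a winning strategy for the first player yields a perfect subset of $S$, whereas a winning strategy for the second player yields a countable $S$. Determinacy guarantees that exactly one of these holds, giving the dichotomy in Definition~\ref{D:perf}.

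\smallskip
\textbf{The steps, in order.} First I would reduce to the case $S \subset [a_0, b_0]$ for some bounded interval: as in the proof of Theorem~\ref{T:perf1}, if $S \cap [n, n+1]$ were countable for every $n \in \mathbb{Z}$ then $S$ would be a countable union of countable sets, hence countable by AC$_\omega$ (available under AD); so some $S \cap [n, n+1]$ is uncountable, and it suffices to find a perfect subset of it. Second, I would encode $S \cap [n,n+1]$ as a subset $A$ of the Baire space $\{0,1\}^{\omega}$ (or $\omega^{\omega}$) via binary expansions, being careful about the countably many dyadic rationals with two expansions, and define a payoff set for the associated perfect set game. Third, I would apply AD to this game to obtain a winning strategy for one of the two players. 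Fourth, I would carry out the two strategy-extraction arguments: from a winning strategy for the ``builder'' I would read off a system of nested intervals with the disjointness and shrinking properties of Definition~\ref{D:gc}, producing a generalized Cantor set inside $S$, which is perfect by Theorem~\ref{T:gc}; from a winning strategy for the opponent I would show that every point of $A$ arises from some play the opponent can defeat, so that $A$ is covered by a strategy-indexed countable family, forcing $S$ to be countable.

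\smallskip
\textbf{The main obstacle} is the bookkeeping in setting up the game so that the two strategy-to-outcome translations both go through cleanly. The delicate point is not the determinacy input — that is simply the hypothesis AD applied to the payoff set — but rather the design of the game tree so that (i) a winning strategy for the point-catching player genuinely yields a \emph{perfect} set and not merely an uncountable one, and (ii) a winning strategy for the escaping player genuinely enumerates $S$ rather than merely a dense or large subset. I expect the cleanest route is to let the builder play successive finite binary strings $s_0 \subsetneq s_1 \subsetneq \cdots$ (each extending the last by at least one bit) with the opponent choosing a single bit at each stage, declaring the builder the winner when the resulting real lies in $A$; a winning builder strategy then assigns to each opponent bit-sequence a point of $A$, and distinct bit-sequences yield distinct well-separated points, directly furnishing the intervals $I_{i_1, \dots, i_n}$ required by Definition~\ref{D:gc}. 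Since the perfect set property under AD is a well-documented theorem of descriptive set theory (see, for instance, the treatment in Jech \cite[Chapter 33]{tJ03} cited above), I would either reproduce this perfect set game argument in full or, more economically, invoke that standard theorem and simply remark that the bounded reduction above connects it to the hypotheses of our earlier results.
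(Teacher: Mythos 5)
The paper does not actually prove Theorem~\ref{T:perf2}: it is stated as a known consequence of AD with a pointer to Chapter 33 of Jech \cite{tJ03}, so the ``more economical'' option you mention in your last sentence is precisely what the author does. Your proposal instead sketches the classical proof via Davis's perfect set game (the $*$-game), and the sketch is correct in outline: the dichotomy ``a winning strategy for the builder yields a perfect subset, a winning strategy for the escaper yields countability'' is the standard route, and your observation that the builder's winning strategy directly furnishes the nested, disjoint, shrinking intervals of Definition~\ref{D:gc} (hence a perfect set by Theorem~\ref{T:gc}) ties it nicely back to the paper's own machinery. If you were to write it out in full, three points need care. First, AD as usually stated concerns games in which both players play natural numbers, whereas your builder plays finite binary strings; these must be coded as integers before AD can be invoked --- routine, but it is the one step where the hypothesis actually enters. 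Second, the reduction to $S \cap [n, n+1]$ is not needed for the game argument itself, since the perfect set property transfers from $2^{\omega}$ to $\mathbb{R}$ along a continuous injection; its real value is that it lets you reuse the decomposition from Theorem~\ref{T:perf1} and AC$_{\omega}$, which, as you note, follows from AD. Third, the countability half deserves one more sentence than you give it: the key counting fact is that each position consistent with the escaper's winning strategy can ``reject'' at most one real, because the strategy's responses determine that real bit by bit, and this is what turns your ``strategy-indexed countable family'' into an injection of the payoff set into a countable set of positions. What your approach buys is self-containedness; what the paper's approach buys is brevity, at the cost of leaving the central descriptive-set-theoretic input as a black box.
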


Combining this result with Theorem~\ref{T:perf1}, we obtain the following result.

\begin{theorem}\label{T:perf3}
Assuming the Axiom of Determinacy in addition to ZF (ZF + AD), for every $a_{0}, b_{0} \in
\mathbb{R}$ with $a_{0} < b_{0}$ and for every uncountable set $S \subset [a_{0}, b_{0}]$,
player A has a winning strategy in the Cantor game on $[a_{0}, b_{0}]$ with target set $S$.
\end{theorem}

\begin{proof}
By Theorem~\ref{T:perf2}, (1) of Theorem~\ref{T:perf1} holds, and so (2) of that theorem
holds.
\end{proof}

As a corollary to Theorem~\ref{T:perf3}, we find that in ZF + AD, the answer to all three
questions is ``no.''

\begin{corollary}\label{C:perf}
Assuming the Axiom of Determinacy in addition to the Zermelo-Fraenkel axioms (ZF + AD), let
$a_{0}, b_{0} \in \mathbb{R}$ with $a_{0} < b_{0}$, and consider the Cantor game on
$[a_{0}, b_{0}]$.  Then
\begin{enumerate}
\item There does not exist an uncountable set $S \subset [a_{0}, b_{0}]$ such that player A
      does not have a winning strategy;
\item There does not exist an uncountable set $S \subset [a_{0}, b_{0}]$ such that player B
      has a winning strategy;
\item There does not exist an uncountable set $S \subset [a_{0}, b_{0}]$ such that neither
      player A nor player B has a winning strategy.
\end{enumerate}
\end{corollary}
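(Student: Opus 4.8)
The plan is to derive all three statements as immediate consequences of Theorem~\ref{T:perf3}, which asserts that under ZF + AD player A has a winning strategy for every uncountable target set $S \subset [a_0, b_0]$. Statement (1) is then just a contrapositive reformulation: since player A always has a winning strategy, there can be no uncountable $S$ for which she lacks one. Statement (3) is equally direct, because the existence of player A's winning strategy already rules out the possibility that neither player has a winning strategy; hence no uncountable $S$ with that property exists.

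The only statement requiring a separate observation is (2). Here I would first establish the auxiliary fact that the two players cannot both possess winning strategies for the same target set. Given a strategy $(f_n)_{n \ge 0}$ for A and a strategy $(g_n)_{n \ge 0}$ for B, I would define a single play by the interleaved recursion $a_1 = f_0(a_0, b_0)$, $b_1 = g_0(a_0, b_0, a_1)$, and in general $a_n = f_{n-1}(a_0, b_0, \dots, a_{n-1}, b_{n-1})$ followed by $b_n = g_{n-1}(a_0, b_0, \dots, a_{n-1}, b_{n-1}, a_n)$ for $n \ge 1$. The matching of domains in Definition~\ref{D:strat} guarantees that this recursion is well defined and yields a play in the sense of Definition~\ref{D:play} that is consistent with both strategies simultaneously. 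Its limit $a = \lim_{n \to \infty} a_n$ therefore lies in $L((f_n)) \cap L((g_n))$. If both strategies were winning, we would have $a \in S$ and $a \in [a_0, b_0] - S$ at once, which is absurd; so at most one player can have a winning strategy.

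With this in hand, statement (2) follows quickly: suppose for contradiction that player B had a winning strategy for some uncountable $S$. By Theorem~\ref{T:perf3}, player A also has a winning strategy for that same $S$, contradicting the auxiliary fact just established. Hence no such uncountable $S$ exists. Alternatively, one could route the argument through Theorem~\ref{T:mainB} and Theorem~\ref{T:perf2}, noting that a winning strategy for B would force $S$ to contain no perfect set while every uncountable subset of $[a_0, b_0]$ contains one under ZF + AD; but the simultaneous-play argument is self-contained and cleaner.

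I do not anticipate a genuine obstacle, since the corollary merely unpacks Theorem~\ref{T:perf3}. The one point demanding a little care is verifying that the interleaved recursion above actually produces a legitimate play, namely that the successive inequalities $a_{n-1} < a_n < b_n < b_{n-1}$ hold and that each argument tuple lies in the domain of the relevant $f_{n-1}$ or $g_{n-1}$. This is immediate from the constraints built into Definition~\ref{D:strat}, so the verification is routine rather than difficult.
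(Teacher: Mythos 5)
Your proposal is correct and follows essentially the same route as the paper, whose entire proof is the observation that all three assertions follow from Theorem~\ref{T:perf3}. The only difference is that you explicitly justify, via the interleaved play consistent with both strategies, the fact that players A and B cannot simultaneously have winning strategies --- a step the paper leaves implicit (here and in the proof of Theorem~\ref{T:mainB}), and your argument for it is sound.
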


\begin{proof}
All three of these assertions follow from the conclusion of Theorem~\ref{T:perf3} that
player A has a winning strategy for every uncountable set $S \subset [a_{0}, b_{0}]$.
\end{proof}

The Axiom of Determinacy includes the definition of a set being determined.  Since the
Axiom of Determinacy applies to infinite games defined in terms of collections of sequences
of natural numbers, and since the Cantor game involves increasing sequences of real numbers
and their limits, that definition is not directly applicable.  Nevertheless, it is
instructive to formulate an analogous definition for the Cantor game.

\begin{definition}\label{D:det}
Let $S \subset [a_{0}, b_{0}]$ with $a_{0} < b_{0}$.  We say that the set $S$ is
\emph{determined} if either player A or player B has a winning strategy in the Cantor game
with target set $S$.  Otherwise, the set $S$ is said to be \emph{non-determined}.
\end{definition}

According to Theorem~\ref{T:perf3} every uncountable subset of $[a_{0}, b_{0}]$ is
determined.  Since player B has a winning strategy for every countable subset of $[a_{0},
b_{0}]$ (see \cite[p.377]{mB07}), it follows that every subset of $[a_{0}, b_{0}]$ is
determined when we assume the axioms of ZF + AD.  Thus under the assumption of the Axiom of
Determinacy, our results offer a parallel to the statement of that axiom.

\section{The Cantor Game and the Axiom of Choice}\label{S:AC}
If we assume the Axiom of Choice instead of the Axiom of Determinacy in addition to the
Zermelo-Fraenkel axioms, we obtain an entirely different set of answers to Baker's three
questions.  Since our answers to two of these questions require the concept of a Bernstein
set of real numbers, we begin with some relevant definitions and results.  Throughout this
section we assume that the Axiom of Choice holds in addition to the Zermelo-Fraenkel
axioms.  Thus the results will be valid in ZFC.

\subsection{Bernstein sets}\label{SS:bern}
In Theorem 5.3 \cite[p.24]{jO80} Oxtoby defines the notion of a Bernstein set of real
numbers.  Here we extend his definition to subsets of $\mathbb{R}$.

\begin{definition}\label{D:bern}
Let $X \subset \mathbb{R}$, and let $X$ have the subspace topology inherited from
$\mathbb{R}$.  A \emph{Bernstein set} in $X$ is a set $S \subset X$ such that neither $S$
nor $X - S$ contains an uncountable closed set.
\end{definition}

Oxtoby also proves the following result.  His proof uses the Well-ordering Principle, which
is equivalent to the Axiom of Choice.

\begin{theorem}\label{T:bern1}
There exists a set $B$ of real numbers such that $B$ is a Bernstein set in $\mathbb{R}$.
\end{theorem}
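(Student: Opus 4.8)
The plan is to construct a Bernstein set by transfinite recursion, using a well-ordering of the collection of all uncountable closed subsets of $\mathbb{R}$ and building $B$ so that it meets and avoids each such closed set. The key preliminary fact is a cardinality count: there are exactly $\mathfrak{c} = 2^{\aleph_0}$ uncountable closed subsets of $\mathbb{R}$, and each such set has cardinality $\mathfrak{c}$. The first follows because every closed set is determined by its (open) complement, and there are only $\mathfrak{c}$ open subsets of $\mathbb{R}$ (each being a countable union of rational-endpoint intervals together with at most countably many boundary choices); the second follows because every uncountable closed set contains a nonempty perfect set, which has cardinality $\mathfrak{c}$ — indeed every uncountable closed set contains a generalized Cantor set, which by Theorem~\ref{T:gc} is perfect and hence of size $\mathfrak{c}$.

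First I would invoke the Well-ordering Principle (equivalent to the Axiom of Choice, which we are assuming throughout this section) to enumerate the family of all uncountable closed subsets of $\mathbb{R}$ as $\{F_\alpha \mid \alpha < \mathfrak{c}\}$, indexed by the ordinals below $\mathfrak{c}$. I would then define by transfinite recursion two sequences of distinct points $(p_\alpha)_{\alpha < \mathfrak{c}}$ and $(q_\alpha)_{\alpha < \mathfrak{c}}$, where at stage $\alpha$ the points $p_\alpha$ and $q_\alpha$ are chosen in $F_\alpha$ so as to be distinct from each other and from all previously chosen points. This is possible because at stage $\alpha$ we have selected fewer than $|\alpha| + |\alpha| < \mathfrak{c}$ points so far, while $|F_\alpha| = \mathfrak{c}$, so the set of forbidden values has cardinality strictly less than $\mathfrak{c}$ and cannot exhaust $F_\alpha$.

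Having completed the recursion, I would set $B = \{p_\alpha \mid \alpha < \mathfrak{c}\}$. By construction the points $p_\alpha$ and $q_\alpha$ are all distinct, so for every $\alpha < \mathfrak{c}$ we have $p_\alpha \in B \cap F_\alpha$ and $q_\alpha \in F_\alpha \setminus B$ (the latter because $q_\alpha$ equals no $p_\beta$). Thus every uncountable closed set $F_\alpha$ meets both $B$ and its complement $\mathbb{R} - B$, which means neither $B$ nor $\mathbb{R} - B$ can contain any uncountable closed set. Therefore $B$ is a Bernstein set in $\mathbb{R}$, as required.

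The main obstacle is the cardinality bookkeeping that justifies the recursion, specifically verifying that at each stage fewer than $\mathfrak{c}$ points have been used so that a fresh pair can always be found; this rests on the two counting facts above, namely that the index set has order type an ordinal below $\mathfrak{c}$ and that each $F_\alpha$ has full cardinality $\mathfrak{c}$. The latter is where the machinery developed earlier in the paper pays off: rather than citing the classical Cantor–Bendixson analysis, I can appeal directly to the fact that an uncountable closed set contains a perfect set of size $\mathfrak{c}$, which the generalized Cantor set results make transparent.
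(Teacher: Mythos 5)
Your construction is correct and is essentially the classical transfinite-recursion argument that the paper itself does not reproduce but instead cites from Oxtoby \cite[Theorem 5.3]{jO80}: well-order the $\mathfrak{c}$-many uncountable closed sets, pick two fresh points from each, and let $B$ collect the first of each pair. The cardinality bookkeeping you flag (at most $\mathfrak{c}$ closed sets, each of size $\mathfrak{c}$ via Cantor--Bendixson and the perfect-set machinery) is exactly the standard justification, so the proposal matches the paper's intended proof.
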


Our first lemma provides us with a convenient method of making new Bernstein sets from
existing ones.

\begin{lemma}\label{L:bern}
Let $X$ and $Y$ be uncountable closed subsets of $\mathbb{R}$ with $Y \subset X$.  If $B$
is a Bernstein set in $X$, then $B' = B \cap Y$ is a Bernstein set in $Y$.
\end{lemma}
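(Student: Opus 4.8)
The plan is to verify directly the two defining conditions of a Bernstein set in $Y$: that neither $B' = B \cap Y$ nor $Y - B'$ contains an uncountable closed subset of $Y$. The essential preliminary is a topological observation. Since $Y$ is closed in $\mathbb{R}$ and $Y \subset X$, a subset $F \subset Y$ is closed in the subspace topology on $Y$ if and only if it is closed in $\mathbb{R}$, and this in turn holds if and only if $F$ is closed in the subspace topology on $X$. Indeed, if $F$ is closed in $Y$, then $F = G \cap Y$ for some $G$ closed in $\mathbb{R}$, and since $Y$ is itself closed in $\mathbb{R}$, the intersection $F$ is closed in $\mathbb{R}$; the converse and the equivalence with closedness in $X$ follow at once because $X$ too is closed in $\mathbb{R}$ and $F \subset Y \subset X$. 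This lets me freely transfer the notion of ``uncountable closed subset'' between $Y$ and $X$, which is precisely what will connect the Bernstein property of $B'$ in $Y$ to that of $B$ in $X$.

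For the first condition, I would suppose toward a contradiction that $B'$ contains an uncountable set $F$ that is closed in $Y$. By the observation above, $F$ is then closed in $X$. Since $F \subset B' = B \cap Y \subset B$, the set $B$ would contain an uncountable closed subset of $X$, contradicting the hypothesis that $B$ is a Bernstein set in $X$. Hence $B'$ contains no uncountable closed subset of $Y$.

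For the second condition, I would first rewrite the complement. Because $B' = B \cap Y$ is a subset of $Y$, we have $Y - B' = Y \setminus B$, and since $Y \subset X$ this equals $(X - B) \cap Y$; in particular $Y - B' \subset X - B$. Now suppose toward a contradiction that $Y - B'$ contains an uncountable set $F$ that is closed in $Y$. Again $F$ is closed in $X$, and $F \subset X - B$, so $X - B$ would contain an uncountable closed subset of $X$, once more contradicting that $B$ is Bernstein in $X$. Therefore $Y - B'$ contains no uncountable closed subset of $Y$ either, and $B'$ is a Bernstein set in $Y$.

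The argument is short, and there is no serious obstacle; the one point requiring care is the topology transfer, which depends essentially on $Y$ being closed in $\mathbb{R}$, so that subsets closed in $Y$ are genuinely closed in $\mathbb{R}$ and hence in $X$. Once this equivalence is in hand, both conditions reduce immediately to the Bernstein property of $B$ via the inclusions $B' \subset B$ and $Y - B' \subset X - B$.
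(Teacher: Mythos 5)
Your proof is correct and is essentially the paper's argument in contrapositive form: both hinge on the single observation that, because $Y$ is closed in $\mathbb{R}$, an uncountable closed subset of $Y$ is also an uncountable closed subset of $X$, so the Bernstein property of $B$ in $X$ transfers to $B' = B \cap Y$ in $Y$. The paper phrases this as "every uncountable closed $C \subset Y$ meets both $B'$ and $Y - B'$" while you phrase it as "neither $B'$ nor $Y - B'$ contains such a $C$," which are the same statement.
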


\begin{proof}
Since $Y$ is uncountable and closed, $B' \neq \emptyset$.  Let $C$ be any uncountable
closed set in $Y$.  Write $C = D \cap Y$, where $D$ is an uncountable closed set in $X$.
We see that $C$ is an uncountable closed set in $X$, so that $B \cap C \neq \emptyset$ and
$(X - B) \cap C \neq \emptyset$.  Observe that if $x \in B \cap C$, then $x \in Y$, and so
$x \in B' \cap C$.  Now if $y \in (X - B) \cap C$, then $y \in Y$.  Since $B' \subset B$
and $y \notin B$, we have $y \notin B'$.  Hence $y \in (Y - B') \cap C$.  This shows that
$B'$ is a Bernstein set in $Y$.
\end{proof}

Using these results we prove the existence of a Bernstein set in a closed interval.

\begin{corollary}\label{C:bern1}
There exists a Bernstein set $B'$ in the closed interval $[a, b]$, where $a < b$.
\end{corollary}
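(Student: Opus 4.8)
The plan is to produce the required Bernstein set in $[a,b]$ by restricting a Bernstein set in all of $\mathbb{R}$ down to the interval, using Lemma~\ref{L:bern} as the transfer device. First I would invoke Theorem~\ref{T:bern1} to obtain a set $B$ that is a Bernstein set in $\mathbb{R}$; this is where the Axiom of Choice is consumed, so no further appeal to it is needed in the corollary itself.

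Next I would apply Lemma~\ref{L:bern} with the choices $X = \mathbb{R}$ and $Y = [a,b]$. The hypotheses of that lemma require $X$ and $Y$ to be uncountable closed subsets of $\mathbb{R}$ with $Y \subset X$. Here $\mathbb{R}$ is a closed subset of itself and is uncountable; $[a,b]$ is closed in $\mathbb{R}$ and, because $a < b$, is uncountable; and $[a,b] \subset \mathbb{R}$. Once these three verifications are in place, the lemma applies verbatim and yields that $B' = B \cap [a,b]$ is a Bernstein set in $[a,b]$, which is precisely the assertion of the corollary.

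The only point that warrants a second glance is whether $\mathbb{R}$ itself legitimately counts as an ``uncountable closed subset of $\mathbb{R}$'' in the sense of the lemma's hypothesis. It does: $\mathbb{R}$ is clopen in itself and is uncountable, so the instantiation $X = \mathbb{R}$ is permitted. Consequently there is essentially no obstacle here; the corollary is an immediate specialization of Lemma~\ref{L:bern}, obtained in a single application once the routine hypothesis checks are recorded.
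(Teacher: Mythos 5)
Your proposal is correct and follows exactly the paper's own route: obtain a Bernstein set $B$ in $\mathbb{R}$ from Theorem~\ref{T:bern1}, then apply Lemma~\ref{L:bern} with $X = \mathbb{R}$ and $Y = [a,b]$ to conclude that $B' = B \cap [a,b]$ is a Bernstein set in $[a,b]$. Your explicit verification that $\mathbb{R}$ and $[a,b]$ satisfy the lemma's hypotheses is a small but welcome addition that the paper leaves implicit.
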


\begin{proof}
Suppose that $B$ is a Bernstein set in $\mathbb{R}$, and let $B' = B \cap [a, b]$.
Applying Lemma~\ref{L:bern} with $X = \mathbb{R}$ and $Y = [a, b]$ shows that $B'$ is a
Bernstein set in $[a, b]$.
\end{proof}

We now show that intersecting a Bernstein set in $[a, b]$ with a subinterval of $[a, b]$
yields a Bernstein set in that subinterval.  We shall need this property in a later
example.

\begin{corollary}\label{C:bern2}
Let $B'$ be a Bernstein set in the closed interval $[a, b]$, where $a < b$, and let
$[c, d]$ be any closed subinterval of $[a, b]$ with $a \le c < d \le b$.  Then $B'' = B'
\cap [c, d]$ is a Bernstein set in $[c, d]$.
\end{corollary}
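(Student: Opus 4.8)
The plan is to reduce this corollary directly to Lemma~\ref{L:bern}, exactly as Corollary~\ref{C:bern1} was reduced to that lemma. Lemma~\ref{L:bern} states that if $X$ and $Y$ are uncountable closed subsets of $\mathbb{R}$ with $Y \subset X$, and $B$ is a Bernstein set in $X$, then $B \cap Y$ is a Bernstein set in $Y$. So the entire strategy is to identify the correct instantiation of $X$, $Y$, and $B$ and then verify that the three hypotheses of the lemma are met.

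First I would set $X = [a, b]$, $Y = [c, d]$, and $B = B'$. With these choices the conclusion $B \cap Y = B' \cap [c, d] = B''$ is a Bernstein set in $Y = [c, d]$ is precisely the assertion of the corollary, so no further manipulation is needed once the hypotheses are checked.

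Next I would verify the three hypotheses. Both $[a, b]$ and $[c, d]$ are closed intervals of positive length (since $a < b$ and $c < d$), so each is a closed subset of $\mathbb{R}$ and each is uncountable. The containment $Y \subset X$ follows from the assumption $a \le c < d \le b$, which gives $[c, d] \subset [a, b]$. Finally, $B'$ is a Bernstein set in $X = [a, b]$ by hypothesis. Thus all the hypotheses of Lemma~\ref{L:bern} are satisfied.

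I do not expect any genuine obstacle here: unlike the parent lemma, whose proof must track how uncountable closed subsets of the subspace $Y$ extend to uncountable closed subsets of $X$, this corollary is purely an application. The only point requiring a moment's attention is confirming that $[c, d]$ is both uncountable and closed so that it qualifies as the set $Y$ in the lemma, and this is immediate from $c < d$. Invoking Lemma~\ref{L:bern} with the stated substitutions then completes the proof.
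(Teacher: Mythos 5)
Your proposal is correct and matches the paper's proof exactly: the paper likewise proves this corollary as a special case of Lemma~\ref{L:bern} with $X = [a, b]$ and $Y = [c, d]$. Your explicit verification of the lemma's hypotheses is just a more detailed write-up of the same one-line argument.
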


\begin{proof}
This is a special case of Lemma~\ref{L:bern} with $X = [a, b]$ and $Y = [c, d]$.
\end{proof}

Bernstein sets have many interesting and pathological properties.  One property that will
be essential later is the following.

\begin{theorem}\label{T:bern2}
Let $B'$ be a Bernstein set in the closed interval $[a, b]$, where $a < b$.  Then $B'$ is
non-Lebesgue measurable and hence uncountable.
\end{theorem}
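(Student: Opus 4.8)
The plan is to show that a Bernstein set $B'$ in $[a,b]$ cannot be Lebesgue measurable, from which uncountability follows immediately (every countable set has measure zero, hence is measurable). The heart of the argument is a contradiction: I will suppose $B'$ is measurable and derive that this forces either $B'$ or its complement to contain an uncountable closed set, contradicting the defining property of a Bernstein set. The key analytic input is the \emph{inner regularity} of Lebesgue measure, namely that for any measurable set $E$, the measure of $E$ equals the supremum of the measures of the compact (equivalently, closed and bounded) subsets of $E$.

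First I would record the measure-theoretic facts needed. Since $B'$ and $[a,b]-B'$ partition the interval $[a,b]$, if both are measurable then $\mu(B') + \mu([a,b]-B') = b - a > 0$, so at least one of the two sets has strictly positive Lebesgue measure. Second, I would invoke inner regularity: if a measurable set $E \subset [a,b]$ has $\mu(E) > 0$, then there exists a compact set $K \subset E$ with $\mu(K) > 0$; in particular $K$ is a closed set of positive measure. Third, I would observe that a closed set of positive Lebesgue measure must be uncountable, since any countable set is a countable union of singletons and hence has measure zero.

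Assembling these pieces gives the contradiction. Suppose, toward a contradiction, that $B'$ is Lebesgue measurable. Then its complement $[a,b]-B'$ in the interval is also measurable. By the partition remark, one of these two sets, call it $E$, has positive measure. By inner regularity there is a closed set $K \subset E$ with $\mu(K) > 0$, and by the third fact $K$ is uncountable. Thus $E$ contains an uncountable closed set. But $E$ is either $B'$ or $[a,b]-B'$, so this contradicts Definition~\ref{D:bern}, which requires that \emph{neither} $B'$ nor $[a,b]-B'$ contain an uncountable closed set. Hence $B'$ is non-measurable. Since every countable subset of $\mathbb{R}$ is Lebesgue measurable (being a null set), the non-measurability of $B'$ forces $B'$ to be uncountable, completing the proof.

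The main obstacle is purely one of citation discipline rather than mathematical difficulty: the argument leans on inner regularity of Lebesgue measure and on the fact that closed sets of positive measure are uncountable, and under the Axiom of Choice (assumed throughout this section) these are standard. I would make sure to state inner regularity explicitly and note that it is where measurability of $E$ is used, since that is the only nontrivial step; everything else is elementary set theory and the definition of a Bernstein set. One subtlety worth flagging is that the complement must be taken \emph{within} $[a,b]$ (matching Definition~\ref{D:bern}), so that the two sets genuinely partition an interval of positive length.
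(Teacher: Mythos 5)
Your proposal is correct and follows essentially the same route as the paper: both arguments hinge on inner regularity of Lebesgue measure together with the fact that a Bernstein set and its complement in $[a,b]$ can only contain countable (hence null) compact subsets. The paper phrases the contradiction as $m(B') = m([a,b]-B') = 0$ forcing $b-a=0$, while you phrase it as a positive-measure piece containing an uncountable closed set; these are the same argument read in opposite directions.
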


\begin{proof}
Our argument is similar to that given by Oxtoby \cite[p.24]{jO80} for his Bernstein set
$B \subset \mathbb{R}$.  Suppose, to arrive at a contradiction, that $B'$ is Lebesgue
measurable, and let $K$ be any compact subset of $B'$.  Since $K$ is closed, $K$ must be
countable, and so the Lebesgue measure $m(K) = 0$.  It follows that $m(B') = sup\{m(K) \mid
K \subset B', K\text{ compact}\} = 0$.  Since $[a, b] - B'$ is also Lebesgue measurable,
similar reasoning shows that $m([a, b] - B') = 0$.  This yields the contradiction that
$b - a = m([a, b]) = m(B') + m([a, b] - B') = 0$.  Thus $B'$ cannot be Lebesgue measurable.
Since every countable set is Lebesgue measurable and has measure 0, $B'$ must also be
uncountable.
\end{proof}

The following theorem will also be helpful.

\begin{theorem}\label{T:bern3}
Let $B'$ be a Bernstein set in the closed interval $[a, b]$, where $a < b$.  Then $B'$ and
$[a, b] - B'$ are dense in $[a, b]$.
\end{theorem}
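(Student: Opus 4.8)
The plan is to deduce both density statements directly from Lemma~\ref{L:bern3}, which asserts that whenever a subset of a closed interval fails to contain a perfect set, its complement is dense. The only preliminary observation needed is that the defining property of a Bernstein set rules out perfect subsets of both $B'$ and its complement $[a,b]-B'$.

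First I would record that a nonempty perfect set is, by definition, closed, and that every nonempty perfect set in $\mathbb{R}$ is uncountable (this is precisely one of Baker's results mentioned in Section~\ref{S:intro}, and it also follows from Theorem~\ref{T:gc} for the generalized Cantor sets constructed earlier). Hence a nonempty perfect set is an uncountable closed set. Since $B'$ is a Bernstein set in $[a,b]$, Definition~\ref{D:bern} guarantees that neither $B'$ nor $[a,b]-B'$ contains an uncountable closed set, and therefore neither contains a nonempty perfect set.

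With this in hand, I would apply Lemma~\ref{L:bern3} twice. Taking $S = B'$, which contains no perfect set, the lemma yields that $[a,b]-B'$ is dense in $[a,b]$. Taking instead $S = [a,b]-B'$, which likewise contains no perfect set, the lemma yields that $[a,b]-([a,b]-B')$ is dense in $[a,b]$; since $B' \subset [a,b]$, this complement is exactly $B'$, so $B'$ is dense in $[a,b]$ as well.

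I expect the argument to be entirely routine, with no substantive obstacle. The only points requiring care are the bookkeeping of complements in the second application of the lemma — one must note that $[a,b]-([a,b]-B') = B'$ holds precisely because $B' \subset [a,b]$ — together with the standard fact that nonempty perfect sets are uncountable, which is exactly what lets the Bernstein condition (phrased in terms of uncountable closed sets) feed into the hypothesis of Lemma~\ref{L:bern3} (phrased in terms of perfect sets).
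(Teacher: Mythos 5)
Your proposal is correct and follows essentially the same route as the paper's own proof: observe that perfect sets are uncountable and closed, conclude from the Bernstein property that neither $B'$ nor $[a,b]-B'$ contains a perfect set, and apply Lemma~\ref{L:bern3} twice. The extra care you take with the identity $[a,b]-([a,b]-B') = B'$ is a small bookkeeping point the paper passes over silently, but there is no substantive difference.
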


\begin{proof}
By definition, $B'$ does not contain an uncountable closed set.  Since every perfect set is
uncountable and closed, $B'$ does not contain a perfect set.  By Lemma~\ref{L:bern3},
$[a, b] - B'$ is dense in $[a, b]$.  Similarly, $[a, b] - B'$ does not contain a perfect
set, and so by Lemma~\ref{L:bern3}, $[a, b] - ([a, b] - B') = B'$ is dense in $[a, b]$.
\end{proof}

\subsection{Examples}\label{SS:ex}
In this section we consider a pair of examples.  Our first example uses the following
theorem and its corollary, which is sometimes referred to as the Cantor-Bendixson Theorem.

\begin{theorem}\label{T:cb}
Let $S$ be an uncountable subset of $\mathbb{R}$.  Then there is a perfect set $P$ such
that $S \cap P$ is uncountable and $S - P$ is countable.
\end{theorem}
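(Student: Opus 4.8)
The plan is to take $P$ to be the set of \emph{condensation points} of $S$, where a condensation point of $S$ is a real number $x$ such that every open neighborhood of $x$ contains uncountably many points of $S$. This is the classical Cantor--Bendixson construction via the perfect kernel. I will verify that this $P$ is a perfect set, that $S - P$ is countable, and that consequently $S \cap P$ is uncountable. The whole argument rests on the second countability of $\mathbb{R}$ (it has a countable base, e.g.\ the open intervals with rational endpoints) together with the fact, available to us in ZFC, that a countable union of countable sets is countable.

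First I would show that $P$ is closed. If $x$ is a limit point of $P$ and $U$ is any open neighborhood of $x$, then $U$ contains some condensation point $y \in P$; since $U$ is then a neighborhood of $y$, it must meet $S$ in an uncountable set. As $U$ was arbitrary, $x$ is itself a condensation point, so $x \in P$. Hence $P$ contains all its limit points and is closed.

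Next I would show that $S - P$ is countable. Let $\mathcal{B}$ be a countable base for $\mathbb{R}$, and let $\mathcal{B}'$ be the subcollection of those $B \in \mathcal{B}$ for which $B \cap S$ is countable; note that $\mathcal{B}'$ is countable. Setting $U = \bigcup \mathcal{B}'$, the set $U \cap S$ is a countable union of countable sets and is therefore countable. Each point of $S - P$ fails to be a condensation point, so it lies in some basic open set meeting $S$ in a countable set, whence $S - P \subseteq U$; conversely, no point of $U$ can be a condensation point, so $U \cap P = \emptyset$. Together these give $S - P = S \cap U$, which is countable. Since $S$ is uncountable, it follows immediately that $S \cap P$ is uncountable, and in particular $P \neq \emptyset$.

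The step I expect to be the crux is showing that $P$ has no isolated points. Suppose, for contradiction, that some $x \in P$ were isolated in $P$; then there is an open neighborhood $V$ of $x$ with $V \cap P = \{x\}$. Because $x$ is a condensation point, $V \cap S$ is uncountable. On the other hand, $V \cap S \subseteq \{x\} \cup (S - P)$, and the right-hand side is countable by the previous paragraph, a contradiction. Therefore $P$ is a nonempty closed set with no isolated points, i.e.\ a perfect set, and it satisfies $S \cap P$ uncountable and $S - P$ countable, completing the proof.
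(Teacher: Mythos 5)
Your proposal is correct and is essentially the paper's own argument: the paper simply cites Exercise 2.27 of Rudin and notes that the witnessing set is $P = C(S)$, the set of condensation points of $S$, which is exactly the construction you carry out in full (closedness of $P$, countability of $S - P$ via a countable base, and the absence of isolated points). No discrepancies to report.
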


\begin{proof}
This is proved in Exercise 2.27 of \cite[p.45]{wR76}.  The proof shows that in fact $P =
C(S)$, the set of all condensation points of $S$.  See Definition~\ref{D:con} in Section~
\ref{SS:ZFC} for a definition of $C(S)$ and related sets.
\end{proof}

\begin{corollary}\label{C:cb}
Let $S$ be a closed and uncountable subset of $\mathbb{R}$.  Then there exists a perfect
set $P$ and a countable set $D$ such that $S = P \cup D$.
\end{corollary}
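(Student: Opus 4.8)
The plan is to apply Theorem~\ref{T:cb} to the uncountable set $S$ and then exploit the hypothesis that $S$ is closed to conclude that the perfect set produced by that theorem is actually contained in $S$. First I would invoke Theorem~\ref{T:cb} to obtain a perfect set $P$ with the property that $S \cap P$ is uncountable and $S - P$ is countable. The crucial additional ingredient is supplied by the proof of that theorem, which identifies $P$ explicitly as the set of condensation points of $S$, written $C(S)$.

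Next I would observe that every condensation point of $S$ is in particular a limit point of $S$, since each of its neighborhoods meets $S$ in uncountably many (hence at least one other) points; therefore $C(S) \subseteq \overline{S}$. Because $S$ is closed, $\overline{S} = S$, and so $P = C(S) \subseteq S$. This is precisely where the closedness hypothesis enters, and it is the only place it is needed: without it the inclusion can fail (for instance $C((0,1)) = [0,1] \not\subseteq (0,1)$), and the decomposition would break down.

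With $P \subseteq S$ established, the decomposition is immediate. I would write $S = P \cup (S - P)$ and set $D = S - P$; then $D$ is countable by Theorem~\ref{T:cb} and $P$ is perfect by the same theorem, which yields $S = P \cup D$ as required. I expect the only genuine obstacle to be verifying the inclusion $P \subseteq S$ via the condensation-point characterization, since everything else is a direct reading of Theorem~\ref{T:cb}. One could instead appeal to the Cantor--Bendixson theorem for closed sets directly, but deducing the result from Theorem~\ref{T:cb} through the identification $P = C(S)$ is the more economical route and keeps the argument self-contained within the results already proved.
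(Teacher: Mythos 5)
Your proposal is correct and follows essentially the same route as the paper: take $P = C(S)$ from Theorem~\ref{T:cb}, use closedness of $S$ to get $C(S) \subseteq S$, and set $D = S - P$. The only difference is that you spell out why $C(S) \subseteq \overline{S} = S$ (condensation points are limit points), a step the paper asserts without elaboration.
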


\begin{proof}
Let $P = C(S)$ be the set defined in Theorem~\ref{T:cb}, and let $D = S - P$.  Then $P$ is
a perfect set and $D$ is countable.  Since $S$ is closed, $P = C(S) \subset S$, and so $S =
P \cup (S - P) = P \cup D$.
\end{proof}

We are now ready for our first example, which illustrates Theorem~\ref{T:mainA}.

\begin{example}\label{E:a}
Consider the Cantor game on the interval $[0, 1]$ with target set $S = (\mathbb{R} -
\mathbb{Q}) \cap [0, 1]$; that is, $S$ is the set of irrational numbers in $[0, 1]$.  We
show that player A has a winning strategy.
\end{example}

\begin{proof}
Since $\mathbb{Q} \cap [0, 1]$ is countable, we may write $\mathbb{Q} \cap [0, 1] = \{q_{n}
\mid n \ge 1\}$.  For each $n\ge 1$ let
\[
   U_{n} = (q_{n} - \frac{1}{2^{n+2}}, q_{n} + \frac{1}{2^{n+2}}) \cap [0, 1]
\]
and define
\[
   U = \bigcup_{n \ge 1} U_{n}.
\]
We see that $m(U) \le \sum_{n=1}^{\infty} m(U_{n}) \le \sum_{n=1}^{\infty} \frac{1}
{2^{n+1}} = \frac{1}{2}$.  Now define $C = [0, 1] - U$.  Since $\mathbb{Q} \cap [0, 1]
\subset U$, $C \subset S$, and we have $m(C) = 1 - m(U) \ge \frac{1}{2}$.  Hence $C$ is
uncountable.  Since $U$ is the union of open sets in $[0, 1]$, $U$ is open, and so $C$ is
closed.  By Corollary~\ref{C:cb} we may write $C = P \cup D$, where $P$ is a perfect set
and $D$ is a countable set; then $P \subset C \subset S$.  Since $S$ contains the perfect
set $P$, player A has a winning strategy by Theorem~\ref{T:mainA}.
\end{proof}

Now the following example shows that the converse of Theorem~\ref{T:mainB} is not true.

\begin{example}\label{E:bern}
Let $S$ be a Bernstein set in $[0, \frac{1}{2}]$.  Now consider $S$ as a subset of
$[0, 1]$.  We show that the set $[0, 1] - S$ satisfies the three conditions of Theorem~
\ref{T:mainB} and that player B does not have a winning strategy when $S$ is the target set
for the Cantor game on $[0, 1]$.
\end{example}

\begin{proof}
We first observe that $[0, 1] - S = (\frac{1}{2}, 1] \cup ([0, \frac{1}{2}] - S)$ and that
both $S$ and $[0, \frac{1}{2}] - S$ do not contain perfect sets.  Since $(\frac{1}{2}, 1]$
contains the perfect set $[\frac{3}{4}, 1]$, $[0, 1] - S$ contains a perfect set.  If we
construct a Cantor middle-thirds set, which is a generalized Cantor set, inside
$[\frac{3}{4}, 1]$, then $[0, 1] - S$ contains this set.  Finally, since $S$ does not
contain a perfect set, it follows from Lemma~\ref{L:bern3} that $[0, 1] - S$ is dense in
$[0, 1]$.  Thus the three necessary conditions in Theorem~\ref{T:mainB} are satisfied.

To see that player B does not have a winning strategy, suppose that $(g_{n})_{n \ge 0}$ is
any strategy for player B.  Let $s \in S$ with $s > 0$ be given.  First define $a_{0} = 0$
and $b_{0} = 1$, and note that $b_{0} > s > 0$.  Next define $a_{1} = \frac{s}{2} = (1 -
\frac{1}{2^{1}}) s$ and
\[
   b_{1} =
   \begin{cases}
   0 &\text{if $g_{0}(a_{0}, b_{0}, a_{1}) < s$;}\\
   g_{0}(a_{0}, b_{0}, a_{1}) &\text{if $g_{0}(a_{0}, b_{0}, a_{1}) \ge s$.}
   \end{cases}
\]
Then define $a_{2} = \frac{3s}{4} = (1 - \frac{1}{2^{2}}) s$ and
\[
   b_{2} =
   \begin{cases}
   0 &\text{if $b_{1} = 0$;}\\
   0 &\text{if $b_{1} > 0$ and $g_{1}(a_{0}, b_{0}, a_{1}, b_{1}, a_{2}) < s$;}\\
   g_{1}(a_{0}, b_{0}, a_{1}, b_{1}, a_{2}) &\text{if $b_{1} > 0$ and $g_{1}(a_{0}, b_{0},
   a_{1}, b_{1}, a_{2}) \ge s$.}
   \end{cases}
\]
Now suppose that $n \ge 3$, that $a_{0}, b_{0}, a_{1}, b_{1}, \dots, a_{n-1}, b_{n-1}$ have
been defined, and that $a_{i} = (1 - \frac{1}{2^{i}}) s$ for $1 \le i \le n - 1$.  Define
$a_{n} = (1 - \frac{1}{2^{n}}) s$, $c_{n} = g_{n-1}(a_{0}, b_{0}, \dots, a_{n-1},
b_{n-1}, a_{n})$, and
\[
   b_{n} =
   \begin{cases}
   0 &\text{if $b_{n-1} = 0$;}\\
   0 &\text{if $b_{n-1} > 0$ and $c_{n} < s$;}\\
   c_{n} &\text{if $b_{n-1} > 0$ and $c_{n} \ge s$.}
   \end{cases}
\]
By induction we have defined sequences $(a_{n})_{n \ge 0}$ and $(b_{n})_{n \ge 0}$ such
that $a_{n} = (1 - \frac{1}{2^{n}}) s$ for all $n \ge 0$ and either (i) $b_{n} =
g_{n-1}(a_{0}, b_{0}, \dots, a_{n-1}, b_{n-1}, a_{n}) \ge s$ for all $n \ge 1$ or (ii)
there exists an integer $N \ge 0$ such that $b_{n} = g_{n-1}(a_{0}, b_{0}, \dots, a_{n-1},
b_{n-1}, a_{n}) \ge s$ for all $n \le N$ and $b_{n} = 0$ for all $n > N$.  We now show that
player B does not have a winning strategy in either case.

In case (i) we see that $((a_{n})_{n \ge 0}, (b_{n})_{n \ge 0})$ is a play of the Cantor
game on $[0, 1]$ which is consistent with the strategy of player B.  Since $\lim_{n \to
\infty} a_{n} = s \in S$, player A wins this particular game, and so $(g_{n})_{n \ge 0}$ is
not a winning strategy for player B.

Now consider case (ii).  In this case we define $\hat{a}_{0} = a_{N}$, $\hat{b}_{0} =
b_{N}$, and $\hat{S} = S \cap [\hat{a}_{0}, \hat{b}_{0}]$.  Observe that $\hat{S}$ is a
Bernstein set by Corollary~\ref{C:bern2}. For the Cantor game on $[\hat{a}_{0},
\hat{b}_{0}]$ with target set $\hat{S}$, define a new strategy for player B
$(\hat{g}_{n})_{n \ge 0}$ as follows.  Let $\hat{g}_{0}(\hat{a}_{0}, \hat{b}_{0},
\bar{a}_{1}) = g_{N}(a_{0}, b_{0}, a_{1}, b_{1}, \dots, a_{N}, b_{N}, \bar{a}_{1})$,
$\hat{g}_{1}(\hat{a}_{0}, \hat{b}_{0}, \bar{a}_{1}, \bar{b}_{1}, \bar{a}_{2}) =
g_{N+1}(a_{0}, b_{0}, a_{1}, b_{1}, \dots, a_{N}, b_{N}, \bar{a}_{1}, \bar{b}_{1},
\bar{a}_{2})$, and for $n \ge 3$, $\hat{g}_{n-1}(\hat{a}_{0}, \hat{b}_{0}, \bar{a}_{1},
\bar{b}_{1}, \dots, \bar{a}_{n-1}, \bar{b}_{n-1}, \bar{a}_{n}) = g_{N+n-1}(a_{0}, b_{0},
\dots, a_{N}, b_{N}, \bar{a}_{1}, \bar{b}_{1},\\ \dots, \bar{a}_{n-1}, \bar{b}_{n-1},
\bar{a}_{n})$ for all real numbers $\bar{a}_{1}, \dots, \bar{a}_{n}$ and $\bar{b}_{1},
\dots, \bar{b}_{n-1}$ satisfying $\hat{a}_{0} < \bar{a}_{1} < \dots < \bar{a}_{n} <
\bar{b}_{n-1} < \bar{b}_{n-2} < \dots < \hat{b}_{0}$.

Since $\hat{S}$ is a Bernstein set, the strategy $(\hat{g}_{n})_{n \ge 0}$ is not a winning
strategy for player B.  Let $((\hat{a}_{n})_{n \ge 0}, (\hat{b}_{n})_{n \ge 0})$ be a play
of the Cantor game on $[\hat{a}_{0}, \hat{b}_{0}]$ consistent with $(\hat{g}_{n})$ such
that player A wins.  Then $\hat{a} = \lim_{n \to \infty} \hat{a}_{n} \in \hat{S} \subset
S$.  Define $a_{n} = \hat{a}_{n-N}$ and $b_{n} = \hat{b}_{n-N}$ for all $n > N$.  By the
way that N was chosen, we have $b_{n} = g_{n-1}(a_{0}, b_{0}, \dots, a_{n-1}, b_{n-1},
a_{n})$ for all $n \le N$.  Now if $n > N$, then $b_{n} = \hat{b}_{n-N} =
\hat{g}_{n-N-1}(\hat{a}_{0}, \hat{b}_{0}, \hat{a}_{1}, \hat{b}_{1}, \dots, \hat{a}_{n-N-1},
\hat{b}_{n-N-1}, \hat{a}_{n-N}) = g_{n-1}(a_{0}, b_{0}, \dots, a_{N}, b_{N}, \hat{a}_{1},
\hat{b}_{1}, \dots, \hat{a}_{n-N-1}, \hat{b}_{n-N-1}, \hat{a}_{n-N}) = g_{n-1}(a_{0},
b_{0}, \dots,\\ a_{N}, b_{N}, a_{N+1}, b_{N+1}, \dots, a_{n-1}, b_{n-1}, a_{n})$.  Thus the
play of the game $((a_{n})_{n \ge 0},\\ (b_{n})_{n \ge 0})$ is consistent with the original
strategy $(g_{n})_{n \ge 0}$.  Since $\lim_{n \to \infty} a_{n} = \lim_{n \to \infty}
\hat{a}_{n} = \hat{a} \in S$, player A wins.  It follows that $(g_{n})_{n \ge 0}$ is not a
winning strategy for player B.
\end{proof}

\subsection{Answers in ZFC}\label{SS:ZFC}
We now show that in ZFC, where the concept of a Bernstein set is defined, the answer to
questions 1 and 3 is ``yes.''

\begin{corollary}\label{C:a}
Let $S$ be the target set for the Cantor game on $[a_{0}, b_{0}]$.  If $S$ is a Bernstein
set in $[a_{0}, b_{0}]$, then player A does not have a winning strategy.
\end{corollary}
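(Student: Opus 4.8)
The plan is to reduce the claim immediately to the characterization of player A's winning strategies proved in Theorem~\ref{T:mainA}, so that essentially no new construction is needed. First I would verify that the hypotheses of Theorem~\ref{T:mainA} are met. That theorem concerns an \emph{uncountable} target set, so before invoking it I must confirm that a Bernstein set $S$ in $[a_{0}, b_{0}]$ is uncountable; this is exactly the conclusion of Theorem~\ref{T:bern2}. Thus $S$ is a legitimate target set to which the equivalences of Theorem~\ref{T:mainA} apply.

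Next I would unwind the definition of a Bernstein set. By Definition~\ref{D:bern}, neither $S$ nor $[a_{0}, b_{0}] - S$ contains an uncountable closed set; in particular $S$ itself contains no uncountable closed set. I would then recall from part~(3) of Theorem~\ref{T:gc} that every (generalized Cantor, and more generally every) perfect set is both closed and uncountable. Consequently, if $S$ were to contain a perfect set $P$, then $P$ would be an uncountable closed subset of $S$, contradicting the Bernstein property. Hence $S$ contains no perfect set.

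Finally, I would apply the equivalence of statements (1) and (3) in Theorem~\ref{T:mainA}: player A has a winning strategy if and only if $S$ contains a perfect set. Since we have just shown that $S$ contains no perfect set, it follows that player A does not have a winning strategy, which is the desired conclusion.

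The proof has no genuine obstacle, since all of the difficulty was absorbed into Theorem~\ref{T:a} (the construction embedding a generalized Cantor set into player A's limit set) and into Theorem~\ref{T:mainA}. The only point requiring any care is the bookkeeping that links the Bernstein condition to the perfect-set criterion: one must explicitly use that perfect sets are \emph{both} closed \emph{and} uncountable so that the ``no uncountable closed subset'' half of the Bernstein definition rules out perfect subsets of $S$. With that observation in place, the corollary is a one-line consequence of the earlier characterization.
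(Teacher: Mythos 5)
Your proposal is correct and follows essentially the same route as the paper: observe that a perfect set is uncountable and closed, so the Bernstein condition forbids $S$ from containing a perfect set, and then invoke the equivalence in Theorem~\ref{T:mainA}. Your explicit check that $S$ is uncountable (via Theorem~\ref{T:bern2}) so that Theorem~\ref{T:mainA} applies is a small point of care the paper leaves implicit, but it does not change the argument.
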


\begin{proof}
Since every perfect set is both uncountable and closed, if $S$ is a Bernstein set, then $S$
does not contain a perfect set.  So by Theorem~\ref{T:mainA}, player A cannot have a
winning strategy in this case.
\end{proof}

Thus Corollary~\ref{C:a} shows that the answer to the first question is ``yes,'' and indeed
Theorem~\ref{T:mainA} characterizes all of the sets for which player A does not have a
winning strategy.  Now the following corollary shows that the answer to question 3 is also ``yes.''

\begin{corollary}\label{C:b}
Let $S$ be the target set for the Cantor game on $[a_{0}, b_{0}]$.  If $S$ is a Bernstein
set in $[a_{0}, b_{0}]$, then neither player A nor player B has a winning strategy.
\end{corollary}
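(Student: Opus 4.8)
The plan is to prove the two halves of the statement separately, since the conclusion ``neither player A nor player B has a winning strategy'' is the conjunction of two independent claims, and each follows directly from a result already in hand. For player A, I would simply invoke Corollary~\ref{C:a}. A Bernstein set $S$ in $[a_{0}, b_{0}]$ contains no uncountable closed set, and in particular no perfect set, since every perfect set is uncountable and closed; by the characterization in Theorem~\ref{T:mainA}, this is precisely the condition under which player A fails to have a winning strategy. Thus this half is immediate --- indeed Corollary~\ref{C:a} is stated for exactly this purpose.

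For player B, I would argue by contradiction using the necessary conditions established in Theorem~\ref{T:mainB}. Suppose, toward a contradiction, that player B had a winning strategy. Then by part (2) of Theorem~\ref{T:mainB} the complement $[a_{0}, b_{0}] - S$ would contain a perfect set $P$. Since $P$ is both uncountable and closed, $[a_{0}, b_{0}] - S$ would then contain an uncountable closed set. But the defining property of a Bernstein set in $X = [a_{0}, b_{0}]$, as given in Definition~\ref{D:bern}, requires that \emph{neither} $S$ \emph{nor} $[a_{0}, b_{0}] - S$ contain an uncountable closed set. This contradiction shows that player B cannot have a winning strategy.

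The key conceptual point --- and the reason both players are simultaneously blocked --- is the symmetry of the Bernstein condition between $S$ and its complement: the obstruction for player A traces back to $S$ having no perfect subset, while the obstruction for player B traces back to $[a_{0}, b_{0}] - S$ having no perfect subset, and a Bernstein set forbids both at once. I anticipate no genuine obstacle in carrying this out, since every ingredient is already proved; the corollary is essentially a repackaging of Corollary~\ref{C:a} together with Theorem~\ref{T:mainB} and Definition~\ref{D:bern}, and it upgrades the special computation of Example~\ref{E:bern} to an arbitrary Bernstein target set with no further work.
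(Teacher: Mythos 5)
Your proposal is correct and matches the paper's own proof essentially exactly: both halves invoke Corollary~\ref{C:a} for player A and the contrapositive of Theorem~\ref{T:mainB}(2) for player B, using the fact that a perfect set is uncountable and closed so the Bernstein condition rules it out of both $S$ and its complement. The only cosmetic difference is that you phrase the player-B half as a contradiction while the paper states it directly as a contrapositive.
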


\begin{proof}
We already proved in Corollary~\ref{C:a} that player A does not have a winning strategy.
Since every perfect set is both uncountable and closed, if $S$ is a Bernstein set, then the
complement of $S$ does not contain a perfect set.  So by Theorem~\ref{T:mainB}, player B
cannot have a winning strategy.
\end{proof}

Recalling our discussion of determinacy following Corollary~\ref{C:perf}, Corollary~
\ref{C:b} states that every Bernstein set in $[a_{0}, b_{0}]$ is non-determined when we
assume the axioms of ZFC.  Thus, under the assumption of the Axiom of Choice, our results
run parallel to results in descriptive set theory in which the Axiom of Choice contradicts
the Axiom of Determinacy.  Chapter 33 of Jech's monograph \cite{tJ03} contains an excellent
discussion of these results.

Example~\ref{E:bern} showed that the necessary conditions in Theorem~\ref{T:mainB} are not
sufficient to ensure that player B has a winning strategy.  We now investigate whether or
not player B can ever have a winning strategy.  To do so we consider the condensation
points of the target set $S$ and how these may relate to winning strategies for player B.

\begin{definition}\label{D:con}
Let $a_{0}, b_{0} \in \mathbb{R}$ with $a_{0} < b_{0}$, and let $S \subset [a_{0}, b_{0}]$.
Define
\[
   C_+(S) = \{x \in [a_{0}, b_{0}] \mid (x, x + \epsilon) \cap S\text{ is uncountable for
   every }\epsilon > 0\}
\]
to be the set of all right condensation points of $S$, and define
\[
   C_-(S) = \{x \in [a_{0}, b_{0}] \mid (x - \epsilon, x) \cap S\text{ is uncountable for
   every }\epsilon > 0\}
\]
to be the set of all left condensation points of $S$.  Then
\[
   C(S) = C_+(S) \cup C_-(S)
\]
is the set of all condensation points of $S$.

Now define the set of all two-sided condensation points of $S$ as
\[
   T(S) = C_+(S) \cap C_-(S);
\]
the set of left-only condensation points of $S$ as 
\[
   L(S) = C_-(S) - C_+(S);
\]
the set of right-only condensation points of $S$ as 
\[
   R(S) = C_+(S) - C_-(S);
\]
and the set of one-sided condensation points of $S$ as
\[
   O(S) = L(S) \cup R(S).
\]
\end{definition}

The following results will be helpful in connecting winning strategies for player B and the
condensation points of the set $S$.

\begin{theorem}\label{T:con1}
Let $S \subset [a_{0}, b_{0}]$, where $a_{0} < b_{0}$.  If $S$ is uncountable, then $S -
C_+(S)$ is countable, and so $S \cap C_+(S)$ and $C_+(S)$ are uncountable.
\end{theorem}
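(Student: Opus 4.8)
The plan is to prove the single substantive claim, that $S - C_+(S)$ is countable; the two remaining assertions then follow immediately, since $S$ is the disjoint union of $S - C_+(S)$ and $S \cap C_+(S)$, so removing a countable set from the uncountable set $S$ leaves $S \cap C_+(S)$ (and a fortiori $C_+(S)$) uncountable. To see that $S - C_+(S)$ is countable, I would first unwind the definition: a point $x \in S$ lies in $S - C_+(S)$ precisely when there is some $\epsilon > 0$ with $(x, x+\epsilon) \cap S$ countable. The difficulty is that this is a purely \emph{right-handed} smallness condition, so the usual trick of covering $x$ by a single rational interval on which $S$ is countable is unavailable. Instead I would localize on the right: for each rational $q$ set $A_q = \{x \in S \mid x < q \text{ and } (x, q) \cap S \text{ is countable}\}$. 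Given $x \in S - C_+(S)$ with its witnessing $\epsilon$, choosing any rational $q \in (x, x+\epsilon)$ shows $x \in A_q$, so $S - C_+(S) \subseteq \bigcup_{q \in \mathbb{Q}} A_q$.

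It then suffices to prove that each $A_q$ is countable, for then $S - C_+(S)$ is a countable union of countable sets and hence countable (the Axiom of Choice, in force throughout this section, is exactly what licenses this). The key structural observation is that for every $x \in A_q$, the set $\{y \in A_q \mid y > x\}$ is contained in the countable set $(x,q) \cap S$ and is therefore countable. So the heart of the matter is a lemma about the real line: \emph{if $A \subseteq \mathbb{R}$ has the property that $\{y \in A \mid y > x\}$ is countable for every $x \in A$, then $A$ is countable.}

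I would prove this lemma by contradiction, assuming $A$ uncountable, and set $\ell = \inf A$. If $\ell \in A$, then $\{y \in A \mid y > \ell\} = A - \{\ell\}$ is uncountable, directly contradicting the hypothesis at $x = \ell$. If $\ell \notin A$, I would choose a strictly decreasing sequence $(x_n)$ in $A$ with $x_n \to \ell$ (possible because $\ell = \inf A$ forces points of $A$ into every interval $(\ell, \ell+\delta)$); since every element of $A$ exceeds $\ell = \lim x_n$, it exceeds some $x_n$, whence $A = \bigcup_n \{y \in A \mid y > x_n\}$ is once more a countable union of countable sets, again contradicting uncountability. Applying this lemma to each $A_q$ then completes the argument.

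I expect the main obstacle to be precisely this lemma, and within it the case $\ell \notin A$, where one cannot apply the hypothesis at a single point and must instead exhaust $A$ by a descending sequence and invoke countable choice. The reduction step (verifying $S - C_+(S) \subseteq \bigcup_q A_q$ and that each $A_q$ has the required up-set property) is routine once the sets $A_q$ are correctly defined, but I would be careful to let $q$ range over \emph{all} rationals, not merely those in $[a_0, b_0]$, so that points of $S$ near $b_0$ that fail to be right condensation points yet require a rational strictly to their right are still captured.
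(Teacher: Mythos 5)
Your proposal is correct, but it reaches the conclusion by a genuinely different route than the paper. The paper's proof is a short deduction from cited results: it invokes Rudin's Exercise 2.27 (via the paper's Theorem on condensation points, which gives that $S - C(S)$ is countable) together with the further assertion, also attributed to the proof of that exercise, that the sets $L(S)$ and $R(S)$ of one-sided condensation points are countable; it then computes $S - C_+(S) \subseteq S - T(S) = (S - C(S)) \cup (S \cap O(S))$ and concludes. Your argument is instead self-contained: you handle the one-sided ("right-handed") nature of $C_+(S)$ directly by covering $S - C_+(S)$ with the sets $A_q = \{x \in S \mid x < q,\ (x,q)\cap S \text{ countable}\}$ for rational $q$, and you reduce each $A_q$ to an order-theoretic lemma (a set $A$ with every up-set $\{y \in A \mid y > x\}$, $x \in A$, countable is itself countable), proved by splitting on whether $\inf A$ is attained. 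Both arguments are sound and both use countable choice (available here, since the section assumes ZFC) to sum countably many countable sets. What your version buys is independence from the unproved claim that $L(S)$ and $R(S)$ are countable, which the paper asserts without detail and which is really the crux of the one-sided refinement; what the paper's version buys is brevity and the reuse of the $C(S)$, $T(S)$, $O(S)$ machinery it has already set up. Your closing cautions --- letting $q$ range over all of $\mathbb{Q}$ rather than $\mathbb{Q}\cap[a_0,b_0]$, and isolating the case $\inf A \notin A$ as the place where a descending sequence and countable choice are needed --- are exactly the right points to be careful about.
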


\begin{proof}
The proof of Exercise 2.27 in \cite[p.45]{wR76}, to which we referred in the proof of
Theorem~\ref{T:cb}, also implies that $L(S)$ and $R(S)$ are countable, so that $O(S) = L(S)
\cup R(S)$ is countable.  By Theorem~\ref{T:cb} $S \cap C(S)$ is uncountable and $S - C(S)$
is countable.  It follows that $S - T(S) = S - (C(S) - O(S)) = (S - C(S)) \cup (S \cap
O(S))$ is countable.  Now we have $T(S) \subset C_+(S)$, so that $S - C_+(S) \subset S -
T(S)$ is countable.  Since $S = (S \cap C_+(S)) \cup (S - C_+(S))$ and $S$ is uncountable,
$S \cap C_+(S)$ is uncountable, so that $C_+(S)$ is also uncountable.
\end{proof}

\begin{theorem}\label{T:15}
If $x \in S \cap C_+(S)$ and $x < y$, then there are uncountably many points $z$ in $S \cap
C_+(S)$ with $x < z < y$.
\end{theorem}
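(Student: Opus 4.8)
The plan is to localize the problem to the interval $(x,y)$ and then invoke Theorem~\ref{T:con1}. First I would exploit the hypothesis that $x \in C_+(S)$: taking $\epsilon = y - x > 0$ in the definition of a right condensation point shows immediately that $(x, y) \cap S$ is uncountable. I then set $S' = (x, y) \cap S$ and regard $S'$ as an uncountable subset of the closed interval $[x, y]$.

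Next I would apply Theorem~\ref{T:con1}, now with $[x, y]$ playing the role of the ambient interval $[a_{0}, b_{0}]$, to the uncountable set $S'$. That theorem yields that $S' \cap C_+(S')$ is uncountable, where $C_+(S')$ denotes the set of right condensation points of $S'$ computed relative to $[x, y]$. Hence there are uncountably many points $z$ satisfying both $z \in S'$ and $z \in C_+(S')$.

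The final step is to transfer these points back to $S \cap C_+(S)$. Each such $z$ lies in $S' = (x, y) \cap S$, so $z \in S$ and $x < z < y$ hold automatically. It remains only to check that $z \in C_+(S)$: since $S' \subset S$, for every $\epsilon > 0$ we have $(z, z + \epsilon) \cap S \supseteq (z, z + \epsilon) \cap S'$, and the right-hand set is uncountable because $z \in C_+(S')$; therefore $(z, z + \epsilon) \cap S$ is uncountable and $z \in C_+(S)$. It follows that the uncountable set $S' \cap C_+(S')$ is contained in $S \cap C_+(S) \cap (x, y)$, which gives the desired conclusion.

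I expect the only delicate point to be the bookkeeping in the last step, namely confirming that a right condensation point of the smaller set $S'$ remains a right condensation point of the larger set $S$, and that Theorem~\ref{T:con1} may legitimately be reapplied on the subinterval $[x, y]$. Both facts follow from the monotonicity of the property ``uncountable'' under passage to supersets, so no genuine obstacle arises; the essential content is simply the observation that $x \in C_+(S)$ packs an uncountable copy of $S$ just to the right of $x$, to which the earlier theorem then applies verbatim.
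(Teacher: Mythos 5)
Your proof is correct. It rests on the same two ingredients as the paper's: the observation that taking $\epsilon = y - x$ in the definition of a right condensation point makes $(x,y) \cap S$ uncountable, and Theorem~\ref{T:con1}. The difference is in how the second ingredient is deployed. The paper applies Theorem~\ref{T:con1} once, to $S$ itself, to conclude that $S - C_+(S)$ is countable; removing this countable set from the uncountable set $(x,y) \cap S$ immediately leaves uncountably many points of $S \cap C_+(S)$ in $(x,y)$, and the proof is done in two lines. You instead relativize: you form $S' = (x,y) \cap S$, apply Theorem~\ref{T:con1} to $S'$ inside $[x,y]$ to get $S' \cap C_+(S')$ uncountable, and then transfer back using the monotonicity $C_+(S') \subset C_+(S)$ (which you verify correctly, since $(z, z+\epsilon) \cap S \supseteq (z, z+\epsilon) \cap S'$). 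Both arguments are sound; yours costs an extra relativization-and-transfer step that the subtraction argument avoids, but in exchange it establishes the slightly stronger local fact that the points $z$ you produce are right condensation points even of the truncated set $(x,y) \cap S$, not merely of $S$. For the statement as given, the paper's direct subtraction is the leaner route.
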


\begin{proof}
Suppose that $x \in S \cap C_+(S)$ and $y \in \mathbb{R}$ is such that $x < y$.  If
$\epsilon = y - x$, then $(x, x + \epsilon) = (x, y)$ contains uncountably many points of
$S$.  Since $S - C_+(S)$ is countable by Theorem~\ref{T:con1}, $(x, y)$ must contain
uncountably many points in $S \cap C_+(S)$.
\end{proof}

Our next result reveals the connection between the countability of the set $S$ and the set
of its condensation points.

\begin{theorem}\label{T:con2}
Let $S \subset [a_{0}, b_{0}]$, where $a_{0} < b_{0}$.  Then $S$ is countable if and only
if $C_+(S) = \emptyset$.
\end{theorem}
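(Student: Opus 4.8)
The plan is to establish the biconditional by proving each implication separately, and essentially all of the substance will come from the previously-established Theorem~\ref{T:con1}.

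First I would handle the forward direction: assuming $S$ is countable, I would show that $C_+(S) = \emptyset$. For any $x \in [a_0, b_0]$ and any $\epsilon > 0$, the set $(x, x + \epsilon) \cap S$ is a subset of the countable set $S$, and is therefore countable. Hence no point $x$ can satisfy the defining condition for membership in $C_+(S)$, which requires $(x, x + \epsilon) \cap S$ to be uncountable for \emph{every} $\epsilon > 0$. It follows at once that $C_+(S) = \emptyset$.

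For the reverse direction, I would argue by contraposition: assuming $S$ is uncountable, I would show that $C_+(S) \neq \emptyset$. This is an immediate consequence of Theorem~\ref{T:con1}, which asserts that whenever $S$ is uncountable the set $C_+(S)$ is itself uncountable, and in particular nonempty. Equivalently, if $C_+(S) = \emptyset$, then $S$ cannot be uncountable, so $S$ is countable. This closes the second implication and completes the biconditional.

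The proof therefore requires almost no new work, and the only genuine obstacle has already been absorbed into Theorem~\ref{T:con1}, whose own proof rests on the countability of the one-sided condensation sets $L(S)$ and $R(S)$ (from the Cantor--Bendixson argument cited in \cite{wR76}) together with the Cantor--Bendixson decomposition of Theorem~\ref{T:cb}. Since that machinery is available, both directions follow directly; the main point to be careful about is simply that the defining condition for $C_+(S)$ is a ``for every $\epsilon$'' statement, which is exactly why countability of $S$ rules out any right condensation point.
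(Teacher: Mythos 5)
Your proof is correct and follows exactly the same route as the paper: the forward direction is the immediate observation that a subset of a countable set cannot be uncountable, and the reverse direction is the contrapositive appeal to Theorem~\ref{T:con1}, which gives that $C_+(S)$ is uncountable (hence nonempty) whenever $S$ is uncountable. No gaps.
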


\begin{proof}
If $S$ is countable, then clearly $(x, x + \epsilon) \cap S$ cannot be uncountable for any
$x$; hence $C_+(S) = \emptyset$.  Now assume that $S$ is uncountable.  By Theorem~
\ref{T:con1}, $C_+(S)$ is uncountable, so that $C_+(S) \neq \emptyset$.
\end{proof}

\begin{corollary}\label{C:con}
If $C_+(S) = \emptyset$, then player B has a winning strategy.
\end{corollary}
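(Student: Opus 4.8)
The plan is to reduce the hypothesis $C_+(S) = \emptyset$ to a statement about the cardinality of $S$ and then invoke the known result for countable target sets. The key observation is Theorem~\ref{T:con2}, which asserts that $S$ is countable if and only if $C_+(S) = \emptyset$. First I would apply the backward implication of that theorem: since we are given $C_+(S) = \emptyset$, it follows immediately that the target set $S$ is countable.

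Once $S$ is known to be countable, the conclusion is exactly the result that Baker established in \cite[p.377]{mB07} and that we recalled in the introduction, namely that player B has a winning strategy whenever the target set is countable. I would therefore cite this fact to finish the argument. For completeness it is worth recording the idea behind that strategy: enumerate $S = \{s_n \mid n \ge 1\}$, and on his $n$th move player B eliminates the point $s_n$ as a possible value of $a = \lim_{n \to \infty} a_n$. If $s_n \le a_n$, then because the sequence $(a_n)$ is strictly increasing we have $a > a_n \ge s_n$, so that $a \ne s_n$ regardless of B's choice. If instead $s_n > a_n$, then the interval $(a_n, \min\{b_{n-1}, s_n\})$ is nonempty, and player B selects $b_n$ inside it, forcing $a \le b \le b_n < s_n$ and hence again $a \ne s_n$. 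Since $a \ne s_n$ for every $n$, the limit $a$ lies outside $S$, and so B wins every play consistent with this strategy.

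The argument presents essentially no obstacle, since all of the substantive work has already been carried out in establishing Theorem~\ref{T:con2} and in Baker's treatment of the countable case. The only point requiring attention is the recognition that the purely topological hypothesis $C_+(S) = \emptyset$ is merely a reformulation of the countability of $S$, after which the corollary follows at once.
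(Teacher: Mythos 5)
Your proposal is correct and matches the paper's own proof exactly: both deduce from Theorem~\ref{T:con2} that $S$ is countable and then cite Baker's result that player B wins whenever the target set is countable. The extra sketch of Baker's diagonal strategy is a nice touch but not needed beyond the citation.
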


\begin{proof}
If $C_+(S) = \emptyset$, then Theorem~\ref{T:con2} implies that $S$ is countable.  By
\cite[p.377]{mB07}, player B has a winning strategy.
\end{proof}

\begin{conjecture}
If $C_+(S) \neq \emptyset$, then player B does not have a winning strategy.
\end{conjecture}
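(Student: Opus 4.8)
The plan is to prove the contrapositive. By Theorem~\ref{T:con2} the hypothesis $C_+(S)\neq\emptyset$ is equivalent to $S$ being uncountable, so the conjecture is the assertion that if $S$ is uncountable then player B has no winning strategy. Concretely, I would fix an arbitrary strategy $(g_n)_{n\ge 0}$ for player B and produce a single play consistent with it whose limit lies in $S$; since a winning strategy for B would forbid every such play, this exhibits that no B-strategy is winning. The subcase in which $[a_0,b_0]-S$ contains no perfect set is immediate from the contrapositive of Theorem~\ref{T:mainB}, so the genuine target is the remaining subcase, exactly the phenomenon isolated in Example~\ref{E:bern}, where $S$ is uncountable and yet $[a_0,b_0]-S$ still contains a perfect set.

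The engine of the construction is a robustness property of right condensation points. First I would record an invariant that A can maintain: suppose the current interval satisfies that $(a_n,b_n)\cap S$ is uncountable. Since $S-C_+(S)$ is countable by Theorem~\ref{T:con1}, the set $(a_n,b_n)\cap S\cap C_+(S)$ is uncountable, so A may choose $a_{n+1}\in(a_n,b_n)\cap S\cap C_+(S)$. Because $a_{n+1}\in C_+(S)$, the set $(a_{n+1},a_{n+1}+\epsilon)\cap S$ is uncountable for every $\epsilon>0$; taking $\epsilon=b_{n+1}-a_{n+1}$ shows that for every admissible response $b_{n+1}\in(a_{n+1},b_n)$ the interval $(a_{n+1},b_{n+1})$ again meets $S$ in an uncountable set. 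The initial interval is admissible since $(a_0,b_0)\cap S=S\setminus\{a_0,b_0\}$ is uncountable. Thus A can keep the play inside an $S$-rich region forever, landing only on points of $S$, no matter how B squeezes the interval from above.

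On top of this invariant I would run a target-and-retarget scheme. From any $S$-rich position A uses Theorem~\ref{T:15} to select a target $t\in S\cap C_+(S)$ in the current interval and drives the $a_n$ upward toward $t$. Either B keeps every response above $t$, in which case $a_n\to t\in S$ and A wins outright; or B eventually plays some $b_N<t$, cutting A off from $t$. In the latter event the invariant guarantees that $(a_N,b_N)\cap S$ is still uncountable, so A selects a fresh target $t'\in(a_N,b_N)\cap S\cap C_+(S)$, necessarily with $t'<b_N<t$, and repeats. If B cuts only finitely often, A reaches the final target and wins; the decisive case is when B cuts infinitely often, yielding a strictly decreasing sequence of targets $t>t'>t''>\cdots$, all in $S\cap C_+(S)$, while $a=\lim a_n$ exists.

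The hard part will be forcing $a\in S$ in this last case, and this is precisely where the non-closedness of $S$ bites. The targets all lie in $S\cap C_+(S)$, but the limit $a$ of a sequence of points of $S$ need not belong to $S$, and right condensation at each target controls only the mass of $S$ lying \emph{above} it, which is exactly the region B has walled off. To close the argument I would attempt a fusion refinement: instead of committing to one descending chain of targets, A builds a tree of re-targetings, in the style of the generalized Cantor scheme underlying Theorem~\ref{T:b}, whose branch-limits fill out a perfect set inside $L((g_n))$, and then arranges the bookkeeping so that the branch actually followed is steered into $S$ rather than into $[a_0,b_0]-S$. Guaranteeing such steering against \emph{every} $(g_n)$ seems to demand additional control over how $S\cap C_+(S)$ is situated inside its closure, for example a selection of targets along which $a$ is itself forced to be a condensation point belonging to $S$, and I do not see how to secure this in ZFC alone. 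This gap, between reaching the closure of $S$ and reaching $S$ itself, is exactly why the statement is offered as a conjecture rather than a theorem.
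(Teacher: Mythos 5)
The statement you were asked to prove is stated in the paper only as a conjecture: the paper gives no proof of it, offering instead essentially the heuristic you have reconstructed (player A can always move inside $S \cap C_+(S)$ by Theorems~\ref{T:con1} and~\ref{T:15}) together with an unexecuted suggestion to apply Zorn's Lemma to a preordered collection of partial plays. Your reductions are correct as far as they go: $C_+(S)\neq\emptyset$ is equivalent to $S$ being uncountable by Theorem~\ref{T:con2}; the subcase where $[a_0,b_0]-S$ contains no perfect set follows from the contrapositive of Theorem~\ref{T:mainB}; and the invariant that $(a_n,b_n)\cap S\cap C_+(S)$ remains uncountable against every admissible response of B is sound. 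But, as you say yourself, none of this produces, for an arbitrary strategy $(g_n)$, a consistent play whose limit lies in $S$: keeping every $a_n$ in $S\cap C_+(S)$ only forces $a=\lim_{n\to\infty} a_n$ into $\overline{S\cap C_+(S)}\subset C(S)$, and when $S$ is not closed (a Bernstein set being the critical case) the limit may in principle land in $\overline{S}-S$. Right condensation at $a_n$ controls the mass of $S$ lying \emph{above} $a_n$, which is exactly the region B truncates at the next move, so the invariant gives no control over where the limit itself falls. That is a genuine gap, and you have located it precisely.

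It is, however, the same gap the paper leaves open, so your proposal should be read as an honest account of the obstruction rather than a failed proof of something the paper establishes. The only real divergence is in the proposed (and in both cases unexecuted) repair: the paper sketches a Zorn's Lemma argument on a collection of partial plays, hoping a maximal element must have its limit in $S$, while you propose a fusion/tree construction in the style of Theorem~\ref{T:b} with the branch steered into $S$. Neither route has been carried out, and the difficulty you name --- passing from $\overline{S}$ to $S$ against every strategy for B --- is exactly why the statement remains a conjecture in the paper.
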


Theorem~\ref{T:con2}, Corollary~\ref{C:con}, and this conjecture imply that player B has a
winning strategy if and only if $S$ is countable.  Thus if this conjecture is correct, then
the answer to question 2 is ``no'' when we assume the axioms of ZFC.

To see that this conjecture is very likely to be true, we begin by showing that player A
may choose the sequence $(a_{n})_{n \ge 0}$ so that for every $n \ge 1$ there are
uncountably many choices of $a_{n}$ from the set $S \cap C_+(S)$.

Let $S \subset [a_{0}, b_{0}]$, where $a_{0} < b_{0}$, be an uncountable set, and define
$c_{0} = \inf(S \cap C_+(S))$.  Then $a_{0} \le c_{0}$, and since $S \cap C_+(S)$ is
uncountable according to Theorem~\ref{T:con1}, $c_{0} < b_{0}$.  If $c_{0} \in S \cap
C_+(S)$, then by Theorem~\ref{T:15} there are uncountably many points $a_{1} \in S \cap
C_+(S)$ such that $a_{0} \le c_{0} < a_{1} < b_{0}$.  If $c_{0} \notin S \cap C_+(S)$, then
we may choose a point $c_{1} \in S \cap C_+(S)$ such that $c_{0} < c_{1} < b_{0}$.  By
Theorem~\ref{T:15} there are uncountably many points $a_{1} \in S \cap C_+(S)$ such that
$a_{0} \le c_{0} < c_{1} < a_{1} < b_{0}$.  In either case, assume that player A has
selected one of these points $a_{1}$.

Now suppose that $b_{1}$ has been chosen such that $a_{1} < b_{1} < b_{0}$.  Again by
Theorem~\ref{T:15}, since $a_{1} \in S \cap C_+(S)$, there are uncountably many points
$a_{2} \in S \cap C_+(S)$ with $a_{1} < a_{2} < b_{1}$.  Assume that player A has selected
one of these points $a_{2}$ and that player B has chosen $b_{2}$ such that $a_{2} < b_{2}
< b_{1}$.  Continuing this process we see that for each $n \ge 1$ player A has uncountably
many choices for $a_{n}$ from the set $S \cap C_+(S)$.

In order to show that player B does not have a winning strategy, we need only find a single
play of the Cantor game in which player A wins.  Recall that the winner of a play of the
Cantor game is determined by the value of $a = \lim_{n \to \infty} a_{n}$.  When $a \in S$,
player A wins, and when $a \in [a_{0}, b_{0}] - S$, player B wins.  When the sequence
$(a_{n})_{n \ge 0}$ is chosen so that $a_{n} \in S \cap C_+(S)$ for all $n \ge 1$, $a \in
\overline{S \cap C_+(S)} \subset \overline{S \cap C(S)} \subset \overline{C(S)} = C(S)$.
Since $S \cap C_+(S)$ is uncountable, $S \cap C(S)$ is also uncountable.  Thus there are
uncountably many ways that player A may choose each term of the sequence $(a_{n})$ and
uncountably many potential limits in $S \cap C(S)$ that will yield a play of the game that
player A wins.  Each time player B chooses a value for the next term $b_{n}$, player A's
choices are restricted, yet player A maintains having uncountably many choices from $S \cap
C_+(S)$ at each step.  So it seems very likely that at least one such sequence will result
in player A winning a game.

Recall from Theorem~\ref{T:mainB} that if player B has a winning strategy, then
$[a_{0}, b_{0}] - S$ is dense in $[a_{0}, b_{0}]$ and contains a perfect set.  In order for
player B to control the outcome of a play of the Cantor game, player B's chosen sequence
$(b_{n})_{n \ge 0}$ must force player A's sequence $(a_{n})_{n \ge 0}$ to converge to a
point of $[a_{0}, b_{0}] - S$.  This seems to require that $b = \lim_{n \to \infty} b_{n} =
\lim_{n \to \infty} a_{n} = a$ and that the points $b_{n}$ be chosen from a subset of
$[a_{0}, b_{0}] - S$ that is both closed and dense in some subinterval of $[a_{0}, b_{0}]$.
Thus $[a_{0}, b_{0}] - S$ would have to contain a closed subinterval that includes points
of player A's chosen sequences, but this is impossible when player A's sequences are chosen
from $S \cap C_+(S)$.

Although it may be possible to employ the Axiom of Choice directly to construct a play of
the Cantor game that player A wins, a more promising approach is to use Zorn's Lemma.  In
this approach we consider increasing sequences of real numbers in $[a_{0}, b_{0}]$ which
determine valid plays of the Cantor game on $[a_{0}, b_{0}]$, and we define a preordering
on some collection of such sequences.  If we can show that each chain of elements in that
collection has an upper bound which also lies in the collection, we can conclude from
Zorn's Lemma that this collection possesses a maximal element.  If we can then show that
this maximal element must have its limit in the set S, then we will have found a play of
the Cantor game that player A wins.  This will prove that player B never has a winning
strategy, and we believe that it will be a ``winning strategy'' for constructing a proof of
our conjecture.

\newpage

\renewcommand{\thepage}{}

\section{Acknowledgements}\label{S:ack}
I would like to thank Professor Matt Baker for proposing this project.  I would also like
to thank Professor Ted Slaman for his suggestions on what could be proved and how to get
started.  Finally I would like to thank Professor Baker again for giving my studies
direction and purpose for the last several years.

\end{document}